\DeclareMathSymbol{\subsetneqq}{\mathbin}{AMSb}{36}
\newcommand{\R}{\mathbb{R}}
\newcommand{\N}{\mathbb{N}}
\newcommand{\C}{\mathbb{C}}
\newcommand{\beq}{\begin{eqnarray}}
\newcommand{\eeq}{\end{eqnarray}}
\newcommand{\bq}{\begin{equation}}
\newcommand{\eq}{\end{equation}}
\newcommand{\beqn}{\begin{eqnarray*}}
\newcommand{\eeqn}{\end{eqnarray*}}
\newcommand{\bex}{\begin{exo}}
\newcommand{\eex}{\end{exo}}
\newcommand{\ben}{\begin{enumerate}}
\newcommand{\een}{\end{enumerate}}
\newtheorem{th1}{{\bf Theorem}}[section]
\newtheorem{thm}[th1]{{\bf Theorem}}
\newtheorem{lem}[th1]{{\bf Lemma}}
\newtheorem{prop}[th1]{{\bf Proposition}}
\newtheorem{rem}[th1]{\bf Remark}
\newtheorem{defi}[th1]{\bf Definition}
\author[R. Ghanmi and T. Saanouni]{R. Ghanmi and T. Saanouni}
\address{University Tunis El Manar,
Faculty of Sciences of Tunis, LR03ES04 partial differential equations and applications, 2092 Tunis, Tunisia.}
\email{\sl Tarek.saanouni@ipeiem.rnu.tn}
\email{\sl ghanmiradhia@gmail.com}
\subjclass{35Q55}
\keywords{Nonlinear Schr\"odinger system, ground state, Gagliardo-Niremberg inequality.}
\title[Coupled fourth-order NLS]{On focusing coupled fourth-order nonlinear Schr\"odinger equations}
\date{\today}
\begin{document}
\begin{abstract}
We investigate some focusing fourth-order coupled Schr\"odinger equations. Existence of ground state and global well-posedness are obtained. Moreover, the best constant of some Gagliardo-Niremberg inequality is studied.
\end{abstract}
\maketitle
\tableofcontents
\vspace{ 1\baselineskip}
\renewcommand{\theequation}{\thesection.\arabic{equation}}
\section{Introduction}
Consider the focusing Cauchy problem for a fourth-order Schr\"odinger system with power-type nonlinearities
\begin{equation}
\left\{
\begin{array}{ll}
i\frac{\partial }{\partial t}u_j +\Delta^2 u_j=  \displaystyle\sum_{k=1}^{m}a_{jk}|u_k|^p|u_j|^{p-2}u_j ;\\
u_j(0,.)= \psi_{j},
\label{S}
\end{array}
\right.
\end{equation}
where $u_j: \R^{N} \times \R \to \C$, for $j\in[1,m]$ and $a_{jk} =a_{kj}$ are positive real numbers.\\
The m-component coupled nonlinear Schr\"odinger system with power-type nonlinearities
\begin{eqnarray*}(CNLS)_p\quad
i\frac{\partial }{\partial t}u_j +\Delta u_j + \mu_j |u_j|^{2p-2}u_j + \displaystyle\sum_{k\neq j}^{m}\beta_{jk}|u_k|^p|u_j|^{p-2}u_j=0 ,
\end{eqnarray*}
arises in many physical problems such as nonlinear optics and Bose-Einstein condensates. This models physical systems in which the field has more than one component. In nonlinear optics \cite{ak} $u_j$ denotes the $j^{th}$ component of the beam in Kerr-like photo-refractive media. The coupling constant $\beta_{jk}$ acts to the interaction between the $j^{th}$ and the $k^{th}$ components of the beam. The $ (CNLS)_p$ system arises also in the Hartree-Fock theory for a two component Bose-Einstein condensate. 
 Readers are referred to various other works \cite{Hasegawa, Zakharov} for the derivation and applications of this system.\\
Fourth-order Schr\"odinger equations have been introduced by Karpman \cite{Karpman} and Karpman-Shagalov \cite{Karpman 1} to take into account the role of small fourth-order dispersion terms in the propagation of intense laser beams in a bulk medium with Kerr nonlinearity.\\
A solution ${\bf u}:= (u_1,...,u_m)$ to \eqref{S} formally satisfies respectively conservation of the mass and the energy
\begin{gather*}
M(u_j):= \displaystyle\int_{\R^N}|u_j(x,t)|^2\,dx = M(\psi_{j});\\
E({\bf u}(t)):= \frac{1}{2}\displaystyle \sum_{j=1}^{m}\displaystyle\int_{\R^N}|\Delta u_j|^2\,dx - \frac{1}{2p}\displaystyle \sum_{j,k=1}^{m}a_{jk}\displaystyle \int_{\R^N} |u_j(x,t)|^p |u_k(x,t)|^p\,dx = E({\bf u}(0)).
\end{gather*}
Before going further let us recall some historic facts about these problems. $(CNLS)_p$ have been extensively studied by many mathematicians in recent years \cite{ambrosetti, ambrosetti 1, bratsch, dancer, dancer 1, lin, liu, lw, montefusco, sirakov, Wei}. Most of these papers are devoted to study the elliptic systems associated to $(CNLS)_p$ and various methods have been employed to construct solution for different regimes of parameter  $\beta_{jk}.$ In \cite{ambrosetti 1, lw, montefusco, S}, the existence of ground state solutions to the associated elliptic system was proved for suitable $\beta_{jk}>0$. In \cite{ambrosetti, bw, dancer 1, lin}, the existence of multiple solutions to some two coupled Schr\"odinger equations was investigated for $\beta_{jk}<0.$ 
For fourth-order Schr\"odinger equation, the model case given by a pure power nonlinearity is of particular interest. The question of well-posedness in the energy space $H^2$ was widely investigated. We denote for $p>1$ the fourth-order Schr\"odinger problem
$$(NLS)_p\quad i\partial_t u+\Delta^2u\pm u|u|^{p-1}=0,\quad u:{\mathbb R}\times{\mathbb R}^N\rightarrow{\mathbb C}.$$
This equation satisfies a scaling invariance. Indeed, if $u$ is a solution to $(NLS)_p$ with data $u_0$, then
$ u_\lambda:=\lambda^{\frac4{p-1}}u(\lambda^4\, .\,,\lambda\, .\,)$
is a solution to $(NLS)_p$ with data $\lambda^{\frac4{p-1}}u_0(\lambda\,.\,).$
For $s_c:=\frac N2-\frac4{p-1}$, the space $\dot H^{s_c}$ whose norm is invariant under the dilatation $u\mapsto u_{\lambda}$ is relevant in this theory. When $s_c=2$ which corresponds to the energy critical case, the critical power is $p_c:=\frac{N+4}{N-4}$, $N\geq 5$. Pausader \cite{Pausader} established global well-posedness in the defocusing subcritical case, namely $1< p < p_c$. Moreover, he established global well-posedness and scattering for radial data in the defocusing critical case, namely $p=p_c$. The same result in the critical case without radial condition was obtained by Miao, Xu and Zhao \cite{Miao}, for $N\geq 9$. The focusing case was treated by the last authors in \cite{Miao 1}. They obtained similar results to those proved by Kenig and Merle \cite{Merle} in the classical Schr\"odinger case. See \cite{ts} in the case of exponential nonlinearity. In \cite{eg}, we have the existence of solutions for a system of coupled nonlinear stationary biharmonic Schr\"odinger equations.\\

In this note, we combine in some meaning the two problems $(NLS)_p$ and $(CNLS)_p.$ Thus, we have to overcome two difficulties. The first one is the presence of bilaplacian in Schr\"odinger operator and the second is the coupled nonlinearities. In a recent work, the authors proved local well-posedness of \eqref{S} in the energy space \cite{rt} in the scale of $1<p\leq\frac{N}{N - 4}$ for $4\leq N \leq 6$ and $1<p<\infty$ if $N=4$.\\

The purpose of this manuscript is two-fold. First, by obtaining existence of a ground state, global well-posedness of the system \eqref{S} is discussed via potential well method. Second, using classical variational methods \cite{Weinstein}, a sharp constant of some Gagliardo-Niremberg inequality is obtained and global existence in the mass critical case is deduced.\\

 The rest of this paper is organized as follows. The next section contains the main results and some technical tools needed in the sequel. The goal of the third section, is to study the stationary problem associated to \eqref{S}. In section four, global existence is discussed via the potential-well theory. Section five is devoted to prove the minimal embedding constant for some vector Gagliardo-Nirenberg inequality. As a consequence, global existence for small initial data is shown for the mass critical case $p = 1 + \frac{4}{N}$, in the last section.\\

We end this section with some definitions. Let the product space as
$$H:={H^2({\R^N})\times...\times H^2({\R^N})}=[H^2({\R^N})]^m$$
where $H^2(\R^N)$ is the usual Sobolev space endowed with the complete norm 
$$ \|u\|_{H^2(\R^N)} := \Big(\|u\|_{L^2(\R^N)}^2 + \|\Delta u\|_{L^2(\R^N)}^2\Big)^\frac12.$$
We denote the real numbers 
 $$p_*:=1+\frac4N\quad\mbox{ and }\quad p^*:=\left\{
\begin{array}{ll}
\frac{N}{N-4}\quad\mbox{if}\quad N>4;\\
\infty\quad\mbox{if}\quad N=4.
\end{array}
\right.$$
We mention that $C$ will denote a
constant which may vary
from line to line and if $A$ and $B$ are nonnegative real numbers, $A\lesssim B$  means that $A\leq CB$. For $1\leq r\leq\infty$ and $(s,T)\in [1,\infty)\times (0,\infty)$, we denote the Lebesgue space $L^r:=L^r({\mathbb R}^N)$ with the usual norm $\|\,.\,\|_r:=\|\,.\,\|_{L^r}$, $\|\,.\,\|:=\|\,.\,\|_2$ and 
$$\|u\|_{L_T^s(L^r)}:=\Big(\int_{0}^{T}\|u(t)\|_r^s\,dt\Big)^{\frac{1}{s}},\quad \|u\|_{L^s(L^r)}:=\Big(\int_{0}^{+\infty}\|u(t)\|_r^s\,dt\Big)^{\frac{1}{s}}.$$
For simplicity, we denote the usual Sobolev Space $W^{s,p}:=W^{s,p}({\mathbb R}^N)$ and $H^s:=W^{s,2}$. If $X$ is an abstract space $C_T(X):=C([0,T],X)$ stands for the set of continuous functions valued in $X$ and $X_{rd}$ is the set of radial elements in $X$, moreover for an eventual solution to \eqref{S}, we denote $T^*>0$ it's lifespan.
\section{Background Material}
In what follows, we give the main results and some estimates needed in the sequel.
 For ${\bf u} :=(u_1,...,u_m)\in H$, we define the action
$$S({\bf u}):= \frac{1}{2}\displaystyle\sum_{j=1}^m\| u_j\|_{H^2}^2-\frac{1}{2p}\displaystyle\sum_{j,k=1}^m a_{jk} \displaystyle\int_{\R^N}|u_j u_k|^{p}\,dx.$$
If $ \alpha, \,\beta\in \R,$ we call constraint
{\small$${2K_{\alpha,\beta}({\bf u}):=\displaystyle\sum_{j=1}^m\big((2\alpha + (N - 4 )\beta) \|\Delta u_j\|^2 + (2\alpha + N \beta) \| u_j\|^2   \big) - \frac{1}{p}\displaystyle\sum_{j,k=1}^m a_{jk}\displaystyle\int_{\R^N}(2p\alpha + N \beta)|u_j u_k|^{p}\,dx.}$$}
\begin{defi}
We say that $\Psi:=(\psi_1,...,\psi_m)$ is a ground state solution to \eqref{S} if
\begin{equation}\label{E}
\Delta^2 \psi_j + \psi_j = \displaystyle\sum_{k=1}^m a_{jk}|\psi_k|^p|\psi_j|^{p - 2}\psi_j,\quad 0\neq \Psi\in H
\end{equation}
and it minimizes the problem
\begin{equation}\label{M}
m_{\alpha,\beta}:= \inf_{0\neq {\bf u}\in H_{rd}}\{ S({\bf u})\quad\mbox{s.\,t}\quad K_{\alpha,\beta}({\bf u}) = 0\}.
\end{equation}
Moreover, $\Psi$ is called vector ground state if each component is nonzero.
\end{defi}
\subsection{Main results}
First, we obtain existence of a ground state solution to \eqref{S}.
\begin{thm}\label{th2}
Take $N\geq4$, $p_*<p<p^*$ and two real numbers $(0, 0)\, \neq \, (\alpha,\beta)\in \R_{+}^2.$ Then
\begin{enumerate}
\item $ m:=m_{\alpha,\beta}$ is nonzero and independent of $(\alpha,\beta);$
\item there is a minimizer of \eqref{M}, which is some nontrivial solution to \eqref{E}.
\end{enumerate}
\end{thm}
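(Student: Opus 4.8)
The plan is to realize $m_{\alpha,\beta}$ as a constrained minimization whose Euler--Lagrange equation is exactly \eqref{E}, the constraint $K_{\alpha,\beta}=0$ being nothing but the vanishing of the derivative of the action along the dilation $\mathbf u_t(x):=e^{\alpha t}\mathbf u(e^{-\beta t}x)$. Writing $g(t):=S(\mathbf u_t)$, a direct computation gives $g'(t)=K_{\alpha,\beta}(\mathbf u_t)$, so $K_{\alpha,\beta}(\mathbf u)=0$ means that $t=0$ is a critical point of $g$. With $A:=\sum_j\|\Delta u_j\|^2$, $B:=\sum_j\|u_j\|^2$ and $D:=\sum_{j,k}a_{jk}\int_{\R^N}|u_ju_k|^p\,dx$, the three monomials in $g$ carry the exponents $2\alpha+(N-4)\beta$, $2\alpha+N\beta$ and $2p\alpha+N\beta$; since $p>p_*$ and $(\alpha,\beta)\in\R_+^2\setminus\{0\}$, the nonlinear exponent is the largest, whence $g$ has (generically) a unique nondegenerate maximum with $g''(0)<0$ on the constraint. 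This single fact drives the whole argument.

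First I would prove $m>0$. Eliminating $D$ via the constraint, one checks the algebraic identity $S(\mathbf u)=\bigl(S-\tfrac{1}{2p\alpha+N\beta}K_{\alpha,\beta}\bigr)(\mathbf u)=c_1A+c_2B$ on $\{K_{\alpha,\beta}=0\}$, where $c_1=\tfrac{2(p-1)\alpha+4\beta}{2(2p\alpha+N\beta)}$ and $c_2=\tfrac{2(p-1)\alpha}{2(2p\alpha+N\beta)}$ are nonnegative and not both zero; in particular $S\ge0$ there. To upgrade to strict positivity I would feed the constraint into the Gagliardo--Niremberg inequality: because $p>p_*$ the nonlinearity is super--mass--critical, so $D\lesssim\|\mathbf u\|_H^{2p\theta}$ with $2p\theta>2$, while the left side of the constraint is $\gtrsim\|\mathbf u\|_H^2$; this forces $\|\mathbf u\|_H$ to be bounded below on the constraint set and hence $m>0$.

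Next comes existence of a minimizer, which I expect to be the main obstacle. Along a minimizing sequence $(\mathbf u_n)\subset H_{rd}$ with $K_{\alpha,\beta}(\mathbf u_n)=0$ and $S(\mathbf u_n)\to m$, the representation $S=c_1A+c_2B$ (together with the constraint and the Gagliardo--Niremberg bound to recover the missing norm in the degenerate case $c_2=0$) yields boundedness in $H$, so up to a subsequence $\mathbf u_n\rightharpoonup\mathbf u$. Here I would exploit radiality: since $p<p^*$ the exponent $2p$ is Sobolev--subcritical, so the Strauss compact embedding $H^2_{rd}(\R^N)\hookrightarrow L^{2p}(\R^N)$ gives $D(\mathbf u_n)\to D(\mathbf u)$. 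The lower bound from the previous step rules out vanishing, so $\mathbf u\neq0$; weak lower semicontinuity of $A,B$ then gives $K_{\alpha,\beta}(\mathbf u)\le0$, and if strict one rescales $\mathbf u\mapsto\mathbf u_{t_*}$ back onto the constraint, the unique--maximum property of $g$ guaranteeing that this does not raise the action above $m$. Combining the inequalities forces $S(\mathbf u)=m$ with $K_{\alpha,\beta}(\mathbf u)=0$, so $\mathbf u$ is a minimizer. The delicate part is precisely excluding vanishing and dichotomy and returning the weak limit to the constraint without increasing the action; this is where radial symmetry, the strict positivity $m>0$, and the unique--maximum structure of $g$ must be combined.

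Finally I would identify the minimizer with a solution of \eqref{E} and deduce the independence in one stroke. By the Lagrange multiplier rule $S'(\mathbf u)=\lambda K'_{\alpha,\beta}(\mathbf u)$; pairing with the dilation generator $\partial_t\mathbf u_t|_{t=0}$ gives $K_{\alpha,\beta}(\mathbf u)=\lambda\,g''(0)$, and since the left side vanishes while $g''(0)\neq0$ we obtain $\lambda=0$, that is $S'(\mathbf u)=0$, which is exactly \eqref{E}; this proves (2). For (1), any solution $\Psi$ of \eqref{E} is a free critical point of $S$, hence $K_{\alpha',\beta'}(\Psi)=\langle S'(\Psi),\partial_t\Psi_t|_{t=0}\rangle=0$ for every $(\alpha',\beta')$, so $\Psi$ is admissible for every problem \eqref{M} and $m_{\alpha',\beta'}\le S(\Psi)=m_{\alpha,\beta}$; exchanging the two pairs gives equality. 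The one configuration requiring separate care is $N=4,\ \alpha=0$, where $g''(0)=0$ and the dilation direction is degenerate; there one replaces it by a transversal scaling to kill $\lambda$, or simply recovers the case a posteriori from the independence already established for nondegenerate pairs.
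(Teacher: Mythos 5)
Your overall route coincides with the paper's: minimize over the radial constraint set, use the compact embedding $H^2_{rd}\hookrightarrow L^{2p}$ to pass to the limit, kill the Lagrange multiplier via the second derivative of the action along the dilation, and deduce independence of $(\alpha,\beta)$ from the Pohozaev identity; your identity $S=c_1A+c_2B$ on $\{K_{\alpha,\beta}=0\}$ even yields boundedness of the minimizing sequence more cleanly than the paper's algebra when $\alpha>0$. The genuine gap is the step that returns the weak limit to the constraint. If $K_{\alpha,\beta}(\mathbf u)<0$, the time $t_*<0$ at which $K_{\alpha,\beta}(\mathbf u_{t_*})=0$ is precisely the maximum point of $g(t)=S(\mathbf u_t)$, so this rescaling \emph{raises} the action, $S(\mathbf u_{t_*})=\max_t g(t)\ge S(\mathbf u)$, and the unique-maximum property of the limit's own orbit gives no comparison at all between $S(\mathbf u_{t_*})$ and $m$. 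What the argument needs is a functional that (i) equals $S$ on the constraint, (ii) is nondecreasing along the dilation, and (iii) is weakly lower semicontinuous once the nonlinear term converges strongly; the paper manufactures exactly this, $H_{\alpha,\beta}=S-\tfrac{1}{2\alpha+N\beta}K_{\alpha,\beta}$, proves the monotonicity (ii) in the first lemma of Section 3, and channels it through Lemma \ref{Lemma}, which identifies $m$ with the relaxed infimum of $H$ over $\{K_{\alpha,\beta}\le 0\}$. The repair is available inside your own proof: your auxiliary functional $S-\tfrac{1}{2p\alpha+N\beta}K_{\alpha,\beta}=c_1A+c_2B$ satisfies (i)--(iii), because its two exponents $2\alpha+(N-4)\beta$ and $2\alpha+N\beta$ are nonnegative; alternatively, apply the maximum property to each $\mathbf u_n$ rather than to the limit (each $\mathbf u_n$ is the global maximum of its own orbit, so $S((\mathbf u_n)_{t_*})\le S(\mathbf u_n)\to m$, then use lower semicontinuity at the fixed time $t_*$). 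As written, however, the claim is unsupported and the inequality it invokes points in the wrong direction.

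The configuration $N=4$, $\alpha=0$ is also not a removable afterthought, and neither of your two patches works as stated. The ``a posteriori from independence'' route gives only the one-sided bound $m_{0,\beta}\le m_{\alpha,\beta}$ (a solution of \eqref{E} satisfies every constraint, hence is admissible for the degenerate problem); it says nothing about the reverse inequality and produces no minimizer for the degenerate problem, so neither assertion (1) nor assertion (2) of the theorem is recovered for that pair. Note also that your positivity argument silently assumes $\min\{2\alpha+(N-4)\beta,\,2\alpha+N\beta\}>0$, which fails exactly there. The paper devotes a full second case to this situation: boundedness comes from a different normalization (in $N=4$, $H_{0,1}$ is invariant and $K_{0,1}$ homogeneous under $\phi\mapsto\phi(e^{-\lambda}\,\cdot\,)$, so one may impose $\|\phi_j^n\|=1$), and since $\pounds^2S\equiv 0$ along this dilation the multiplier cannot be killed; instead one writes the Euler--Lagrange system in the form $\Delta^2\phi_j=(4\eta-1)\bigl(\phi_j-\sum_k a_{jk}|\phi_k|^p|\phi_j|^{p-2}\phi_j\bigr)$, proves $4\eta-1<0$ by an integral identity, and then absorbs this constant by a further spatial rescaling so that the rescaled limit solves \eqref{E}. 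Your phrase ``transversal scaling to kill $\lambda$'' gestures at this, but that computation is the actual content of the paper's second case and is absent from the proposal.
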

\begin{rem}
If $\Psi\in H$ is a solution to \eqref{E}, then $e^{it}\Psi$ is a global solution of \eqref{S} said standing wave.
\end{rem}
Second, using the potential well method, we discuss the existence of a global solution to the focusing problem \eqref{S}.
\begin{thm}\label{th3}
Take $4\leq N\leq 6$ and $ p_*< p< p^*.$ Let $\Psi \in H$ and ${\bf u}\in C_{T^*}(H)$ the maximal solution to \eqref{S}. If there exist $(0,0)\neq (\alpha,\beta)\in \R_+^2 $ and $t_0\in  [0, T^*)$ such that 
$${\bf u}(t_0)\in A_{\alpha,\beta}^+:= \{ {\bf v}\in H \quad\mbox{s. t}\quad S({\bf v})<m\quad\mbox{and}\quad K_{\alpha,\beta}({\bf v})\geq 0\},$$
then ${\bf u}$ is global.
\end{thm}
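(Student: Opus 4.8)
The plan is to reduce global existence to a uniform-in-time a priori bound on $\|\mathbf{u}(t)\|_H$ and then invoke the blow-up alternative from the local theory of \cite{rt}: on the range $4\le N\le6$, $p_*<p<p^*$ the maximal solution $\mathbf{u}\in C_{T^*}(H)$ satisfies $\lim_{t\to T^*}\|\mathbf{u}(t)\|_H=\infty$ whenever $T^*<\infty$, so any such bound forces $T^*=\infty$. The two conservation laws of \eqref{S} drive everything. Conservation of each mass $M(u_j)$ keeps the total mass $\sum_{j=1}^m\|u_j(t)\|^2$ constant, and since
$$S(\mathbf{u})=\frac12\sum_{j=1}^m M(u_j)+E(\mathbf{u})$$
is a sum of conserved quantities, the action is conserved: $S(\mathbf{u}(t))=S(\mathbf{u}(t_0))<m$ for all $t$. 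Hence it suffices to bound $\sum_{j=1}^m\|\Delta u_j(t)\|^2$ uniformly.

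First I would show that the set $A_{\alpha,\beta}^+$ is invariant under the flow. The map $t\mapsto K_{\alpha,\beta}(\mathbf{u}(t))$ is continuous on $[0,T^*)$, since $\mathbf{u}\in C_{T^*}(H)$ and $K_{\alpha,\beta}$ is continuous on $H$: the quadratic terms are continuous for the $H^2$-topology, and because $p<p^*$ the embedding $H^2\hookrightarrow L^{2p}$ makes $\mathbf{u}\mapsto\sum_{j,k}a_{jk}\int_{\R^N}|u_ju_k|^p\,dx$ continuous. We may assume $\mathbf{u}(t_0)\ne0$, otherwise the total mass vanishes, $\Psi=0$, and $\mathbf{u}\equiv0$ is trivially global; conservation of mass then keeps $\mathbf{u}(t)\ne0$ throughout. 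Suppose, for contradiction, that $K_{\alpha,\beta}(\mathbf{u}(t_1))<0$ for some $t_1$. Since $K_{\alpha,\beta}(\mathbf{u}(t_0))\ge0$, continuity produces $\bar t$ strictly between with $K_{\alpha,\beta}(\mathbf{u}(\bar t))=0$ and $\mathbf{u}(\bar t)\ne0$; the variational definition \eqref{M} then gives $S(\mathbf{u}(\bar t))\ge m$, contradicting $S(\mathbf{u}(\bar t))=S(\mathbf{u}(t_0))<m$. Thus $K_{\alpha,\beta}(\mathbf{u}(t))\ge0$, i.e. $\mathbf{u}(t)\in A_{\alpha,\beta}^+$, for every $t\in[0,T^*)$.

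It remains to turn $K_{\alpha,\beta}\ge0$ into coercivity. Choosing $\mu:=(2p\alpha+N\beta)^{-1}>0$, the coefficient of the nonlinear term in $S-\mu K_{\alpha,\beta}$ cancels, and a direct computation gives the identity
$$S(\mathbf{u})-\mu K_{\alpha,\beta}(\mathbf{u})=\frac{(p-1)\alpha+2\beta}{2p\alpha+N\beta}\sum_{j=1}^m\|\Delta u_j\|^2+\frac{(p-1)\alpha}{2p\alpha+N\beta}\sum_{j=1}^m\|u_j\|^2.$$
Both coefficients are nonnegative, and the first is strictly positive because $p>1$ and $(\alpha,\beta)\in\R_+^2\setminus\{(0,0)\}$. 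Evaluating at $\mathbf{u}(t)$ and using $K_{\alpha,\beta}(\mathbf{u}(t))\ge0$ gives
$$\frac{(p-1)\alpha+2\beta}{2p\alpha+N\beta}\sum_{j=1}^m\|\Delta u_j(t)\|^2\le S(\mathbf{u}(t))=S(\mathbf{u}(t_0))<m,$$
a uniform bound on $\sum_{j=1}^m\|\Delta u_j(t)\|^2$. Adding the conserved total mass bounds $\|\mathbf{u}(t)\|_H$ for all $t\in[0,T^*)$, and the blow-up alternative yields $T^*=\infty$.

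The heart of the argument, and the step I expect to demand the most care, is the invariance of $A_{\alpha,\beta}^+$---specifically the inequality $S(\mathbf{v})\ge m$ for every nontrivial $\mathbf{v}$ on $\{K_{\alpha,\beta}=0\}$. Since the infimum \eqref{M} is taken over the radial class $H_{rd}$, using it for a generically non-radial $\mathbf{u}(\bar t)$ requires either restricting to radial data or an auxiliary fact that the constrained minimum is unchanged on all of $H$; reconciling this with Theorem \ref{th2} (which gives $m\ne0$ and its independence of $(\alpha,\beta)$) is the delicate point. The remaining ingredients---continuity of $t\mapsto K_{\alpha,\beta}(\mathbf{u}(t))$, the first-hitting-time argument, and the linear identity above---are routine.
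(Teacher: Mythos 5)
Your proof is correct, and it reaches the paper's conclusion by a genuinely different coercivity step. The paper first proves a separate lemma (Lemma \ref{fn}, via a connectivity argument) that the sets $A_{\alpha,\beta}^+$ are independent of $(\alpha,\beta)$, and only then estimates with the \emph{fixed} pair $(1,1)$: it writes $m> S({\bf u})-\frac{1}{2+N}K_{1,1}({\bf u})=H_{1,1}({\bf u})\geq \frac{2}{2+N}\sum_{j}\|\Delta u_j\|^2$. It is forced to switch to $(1,1)$ because its multiplier $\mu=(2\alpha+N\beta)^{-1}$ cancels the \emph{mass} term, leaving $H_{\alpha,\beta}({\bf u})=\frac{1}{2\alpha+N\beta}\bigl[2\beta\sum_j\|\Delta u_j\|^2+\alpha(1-\frac1p)\sum_{j,k}a_{jk}\int_{\R^N}|u_ju_k|^p\,dx\bigr]$, whose $\dot H^2$-coefficient degenerates when $\beta=0$. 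Your multiplier $\mu=(2p\alpha+N\beta)^{-1}$ cancels the \emph{nonlinear} term instead, and your identity is correct: the coefficients of $\sum_j\|\Delta u_j\|^2$ and $\sum_j\|u_j\|^2$ in $S-\mu K_{\alpha,\beta}$ come out to $\frac{(p-1)\alpha+2\beta}{2p\alpha+N\beta}$ and $\frac{(p-1)\alpha}{2p\alpha+N\beta}$, the first of which is strictly positive for every $(0,0)\neq(\alpha,\beta)\in\R_+^2$ since $p>1$. So you obtain the uniform $\dot H^2$ bound directly for the given pair, and Lemma \ref{fn} becomes unnecessary for this theorem --- a genuine simplification. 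Your invariance step is the same first-hitting argument as the paper's Lemma \ref{lem}, with two refinements the paper omits: you dispose of the trivial solution so that the hitting point is admissible in \eqref{M}, and you make the blow-up alternative explicit rather than leaving ``finally, $T^*=\infty$'' implicit. As for the delicate point you flag --- that $m$ in \eqref{M} is an infimum over $H_{rd}$ while ${\bf u}(\bar t)$ need not be radial --- this is a real gap, but it is the paper's gap exactly as much as yours: Lemma \ref{lem} invokes ``the definition of $m$'' for a general $H$-valued solution in precisely the same way, so your argument is faithful to, and more candid than, the published one.
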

Now, we study the existence of a vector ground state. For easy computation, we make the following assumptions
$$a_{jj} = \mu_j\; \mbox{ and }\; a_{jk} = \beta \quad\mbox{for}\quad j\neq k\in[1,m].$$
Then, the system \eqref{S} takes the form
\begin{equation}\label{E1}
\Delta^2\psi_j + \psi_j = \mu_j|\psi_j|^{2p -2}\psi_j + \beta \displaystyle\sum_{k\neq j}|\psi_k|^p|\psi_j|^{p - 2}\psi_j.
\end{equation}
The next result guarantees the uniqueness of a positive vector ground state.
\begin{thm}\label{unicité}
Let $4\leq N$, $p_*< p< p^*$ and $\Psi$ a ground state of \eqref{E1}. Then
\begin{enumerate}
\item
at least two components of $\Psi$ are non zero if $\beta>0$ is large enough;
\item
$\Psi$ is a unique positive vector ground state if $\beta>0$ is small enough.
\end{enumerate}
\end{thm}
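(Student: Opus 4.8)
The plan is to handle the two assertions separately, both resting on the Nehari reformulation of the ground state. Fixing the admissible pair $(\alpha,\beta)=(1,0)$ in the constraint (legitimate because by Theorem \ref{th2} the level $m$ is independent of that pair), the condition $K_{\alpha,\beta}(\mathbf u)=0$ becomes the Nehari identity $\langle S'(\mathbf u),\mathbf u\rangle=0$, that is $\sum_{j}\|u_j\|_{H^2}^2=\sum_{j,k}a_{jk}\int_{\R^N}|u_ju_k|^p\,dx$, on which $S(\mathbf u)=\frac{p-1}{2p}\sum_{j}\|u_j\|_{H^2}^2$. The coupling constant of \eqref{E1} is the $\beta$ of the statement; I write $m(\beta)$ for the ground state level to record this dependence.

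For (1), let $m_j>0$ be the least energy level of the scalar equation $\Delta^2\phi+\phi=\mu_j|\phi|^{2p-2}\phi$ and put $m_*:=\min_{1\le j\le m}m_j$, a quantity independent of $\beta$. If a ground state were supported on the single component $j$, that component would lie on the scalar Nehari set and the action would be at least $m_j\ge m_*$. Hence it suffices to produce $m(\beta)<m_*$ for $\beta$ large. Fixing a nonzero radial $v\in H^2$, I take the two-component competitor $\mathbf u=(v,v,0,\dots,0)$ and project it onto the Nehari set by the dilation $t\mathbf u$, which lands at $t^{2p-2}=A/B$ with $A=2\|v\|_{H^2}^2$ and $B=(\mu_1+\mu_2+2\beta)\int_{\R^N}|v|^{2p}\,dx$. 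Then $m(\beta)\le S(t\mathbf u)=\frac{p-1}{2p}A^{\frac{p}{p-1}}B^{-\frac1{p-1}}\to 0$ as $\beta\to\infty$, since $B\to\infty$ while $A$ is fixed. Thus $m(\beta)<m_*$ for all large $\beta$, no ground state can be semi-trivial, and being nontrivial it must have at least two nonzero components.

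For (2) I first note that for $\beta$ small the unconstrained minimizer is semi-trivial (reversing the estimate in (1): a single-component competitor keeps $m(\beta)\le m_*$, whereas any fully nontrivial configuration costs $\approx\sum_j m_j>m_*$); accordingly I read ``positive vector ground state'' as a least-energy solution of \eqref{E1} with all components positive, and proceed by perturbation off the decoupled problem. At $\beta=0$ the system \eqref{E1} splits into the scalar equations $\Delta^2\psi_j+\psi_j=\mu_j|\psi_j|^{2p-2}\psi_j$, whose unique positive radial solution I denote $\phi_{\mu_j}$, so the only positive vector solution at $\beta=0$ is $\Phi:=(\phi_{\mu_1},\dots,\phi_{\mu_m})$. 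I would then show that any family $\Psi_\beta$ of positive vector ground states stays bounded in $H$ and converges to $\Phi$ as $\beta\to0^+$: testing the $j$-th equation with $\psi_j$ and invoking the Gagliardo--Nirenberg inequality gives a uniform lower bound $\|\psi_j\|_{H^2}\ge c>0$ on every component, so none can vanish, the limiting configuration solves the decoupled system with all components positive, and scalar uniqueness forces it to equal $\Phi$. Finally I would linearise \eqref{E1} at $\Phi$: at $\beta=0$ the linearised operator is block diagonal with blocks $L_j:=\Delta^2+1-(2p-1)\mu_j\phi_{\mu_j}^{2p-2}$, and if each $L_j$ is nondegenerate on the radial subspace, the implicit function theorem yields, for small $\beta$, a unique smooth branch of positive radial solutions near $\Phi$. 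Combining the two steps, every positive vector ground state lies in this neighbourhood once $\beta$ is small and hence coincides with the branch, which is the asserted uniqueness.

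The genuine obstacle is the scalar input on which the whole scheme rests: the uniqueness of $\phi_{\mu_j}$ and, above all, the nondegeneracy $\ker L_j|_{\mathrm{rad}}=\{0\}$. In contrast with the second-order case (Kwong, Weinstein), the biharmonic operator obeys no maximum principle and the standard ODE and moving-plane arguments fail, so controlling the radial kernel of $L_j$ is the delicate point on which the implicit function theorem, and therefore the whole proof, depends. A secondary care point is the convergence step, where the uniform lower bound $\|\psi_j\|_{H^2}\ge c$ is exactly what rules out vanishing of a component as $\beta\to0^+$.
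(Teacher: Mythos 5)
Your proposal is correct and follows the same two-step strategy as the paper, so I only record the differences in execution. For part (1) the paper also argues by an energy-level comparison: assuming the ground state is semi-trivial (so that its action is a $\beta$-independent positive constant coming from a scalar problem), it forms the $m$-component competitor $\gamma(t)=(\phi_1(\cdot/t),\dots,\phi_m(\cdot/t))$ built from the scalar solutions, locates the dilation $\bar t$ where the scaling constraint $K_{0,1}(\gamma(\bar t))=0$ (instead of your Nehari constraint $K_{1,0}$), and reads off that $g(\bar t)=\max_{t\geq0}S(\gamma(t))$ carries the coupling term $\beta\sum_{j\neq k}\int|\phi_j\phi_k|^p\,dx$ to a negative power, hence $0<m\leq g(\bar t)\to0$ as $\beta\to\infty$. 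Your two-component competitor $(v,v,0,\dots,0)$ projected on the Nehari set does the same job with less computation and is in fact more robust: the paper's explicit maximizer carries the factor $\big(\frac{N-4}{N}\big)^{1/4}$ and its final bound degenerates at $N=4$ (there $g(\bar t)$ no longer tends to $0$ with $\beta$), whereas your estimate $m(\beta)\lesssim B^{-1/(p-1)}$ is uniform in $N\geq4$; both reductions to a convenient constraint are licensed by the $(\alpha,\beta)$-independence of $m$ in Theorem \ref{th2}. For part (2) the paper's entire proof is one sentence --- apply the implicit function theorem together with the uniqueness result of \cite{Swanson} at $\beta=0$ --- i.e.\ exactly your scheme with none of the supporting steps written out: the nondegeneracy of the radial kernel of $L_j=\Delta^2+1-(2p-1)\mu_j\phi_{\mu_j}^{2p-2}$, which you rightly isolate as the delicate biharmonic ingredient, and the convergence argument showing that every positive vector ground state approaches the decoupled solution as $\beta\to0^+$ (without which the implicit function theorem yields only local uniqueness along a branch) are both silently assumed there, so your write-up is if anything more complete than the paper's. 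Your further remark that for small $\beta$ the constrained minimizer should be semi-trivial, so that ``positive vector ground state'' must be read as a least-energy fully nontrivial solution, identifies a genuine subtlety in the statement itself that the paper never addresses.
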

In some cases, there exists a positive vector solution whose components are constant multiples of $w$ which is \cite{Swanson} the unique radial solution of
\begin{equation}\label{w}\Delta^2w + w =  w^{2p -1}.\end{equation}
The proof of the next result follows as in \cite{Nghiem}.
\begin{thm}\label{2.4}
Let $4\leq N$, $p_*< p< p^*$ and $\beta>0$ given as in the previous Theorem. Then, the system \eqref{E1} has a positive vector solution $\Psi^*$ which can be written in terms of $w,$ provided one of the following two conditions holds
\begin{enumerate}
\item $p=2$ and $0<\beta <\min_{1\leq j\leq m}\{\mu_j\}$ or $\beta>\max_{1\leq j\leq m}\{\mu_j\} ;$
\item $1<p\neq 2$.
\end{enumerate}
\end{thm}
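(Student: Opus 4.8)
The plan is to look for a \emph{synchronized} solution, i.e. to seek $\Psi^*=(c_1w,\dots,c_mw)$ with constants $c_j>0$ and $w>0$ the unique positive radial solution of \eqref{w}. Substituting $\psi_j=c_jw$ into \eqref{E1} and using $\Delta^2w+w=w^{2p-1}$ together with $w>0$, the left-hand side becomes $c_jw^{2p-1}$, while the right-hand side becomes $\big(\mu_jc_j^{2p-1}+\beta c_j^{p-1}\sum_{k\neq j}c_k^p\big)w^{2p-1}$. Dividing by $w^{2p-1}$ and then by $c_j>0$, the system \eqref{E1} is equivalent, under this ansatz, to the purely algebraic system
\begin{equation}\label{alg}
\mu_j c_j^{2p-2}+\beta c_j^{p-2}\sum_{k\neq j}c_k^p=1,\qquad j=1,\dots,m.
\end{equation}
Hence it suffices to produce a solution of \eqref{alg} with all $c_j>0$; then $\Psi^*=(c_1w,\dots,c_mw)$ is the desired positive vector solution of \eqref{E1}.

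In the first case $p=2$ the system \eqref{alg} linearizes in the unknowns $t_j:=c_j^2$, reading $(\mu_j-\beta)t_j+\beta S=1$ with $S:=\sum_lt_l$. Solving gives $t_j=\dfrac{1}{(1+\beta A)(\mu_j-\beta)}$, where $A:=\sum_l(\mu_l-\beta)^{-1}$ and $S=A/(1+\beta A)$. The decisive point is the sign analysis. If $0<\beta<\min_j\mu_j$ then $\mu_j-\beta>0$, whence $A>0$, $1+\beta A>0$ and $t_j>0$. If $\beta>\max_j\mu_j$ then $\mu_j-\beta<0$ and each term satisfies $\beta/(\mu_j-\beta)<-1$, so $1+\beta A<0$; the two negative factors multiply and again $t_j>0$. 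In both regimes $c_j:=\sqrt{t_j}>0$ solves \eqref{alg}.

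In the second case $1<p\neq2$ the system \eqref{alg} is genuinely nonlinear and $\beta$ is taken small as in Theorem \ref{unicité}. I would argue by continuation from the decoupled problem. At $\beta=0$ the system reduces to $\mu_jc_j^{2p-2}=1$, with the unique positive solution $c^0:=(\mu_1^{-1/(2p-2)},\dots,\mu_m^{-1/(2p-2)})$. Setting
$$\Phi_j(c,\beta):=\mu_jc_j^{2p-2}+\beta c_j^{p-2}\sum_{k\neq j}c_k^p-1,$$
the map $\Phi=(\Phi_j)$ is smooth on $\{c_j>0\,\forall j\}\times\R$ (here it is essential that all components stay positive, since the exponents $p-2$ and $2p-3$ may be negative). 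As $D_c\Phi(c^0,0)=\mathrm{diag}\big((2p-2)\mu_j(c_j^0)^{2p-3}\big)$ is invertible, the implicit function theorem yields a $C^1$ branch $\beta\mapsto c(\beta)$ with $c(0)=c^0$ and $\Phi(c(\beta),\beta)=0$; by continuity $c_j(\beta)>0$ for $\beta>0$ small, giving the required positive solution of \eqref{alg}.

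The main obstacle is positivity, and it is worth noting why a naive variational shortcut fails. A positive synchronized solution corresponds, up to rescaling, to a critical point of $T(c):=\sum_j\mu_jc_j^{2p}+\beta\sum_{j\neq k}c_j^pc_k^p$ on the sphere $\{\sum_jc_j^2=1\}$; but when $p>2$ this critical point need not be the global maximizer (for small $\beta$ the maximizer turns out to be semi-trivial, with a vanishing component), so one cannot simply maximize $T$. This is precisely why I separate the exactly solvable case $p=2$ from the case $p\neq2$, where the perturbative implicit-function argument locates the interior critical point directly. The remaining verifications — smoothness of $\Phi$ away from $\{c_j=0\}$, invertibility of the diagonal Jacobian, and persistence of positivity along the branch — are then routine.
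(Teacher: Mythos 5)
Your reduction is the right one, and it is in fact all the paper itself has to offer: the paper's ``proof'' of Theorem \ref{2.4} is the single sentence that it follows as in \cite{Nghiem}, and the method of that reference is precisely your synchronized ansatz $\Psi^*=(c_1w,\dots,c_mw)$, with $w$ the solution of \eqref{w}, leading to the algebraic system
\[
\mu_j c_j^{2p-2}+\beta\, c_j^{p-2}\sum_{k\neq j}c_k^p=1,\qquad j=1,\dots,m.
\]
Your case (1) is complete and correct: for $p=2$ the system is linear in $t_j=c_j^2$, and the sign analysis (all factors positive when $0<\beta<\min_j\mu_j$; each $\beta/(\mu_j-\beta)<-1$ when $\beta>\max_j\mu_j$, forcing $1+\beta A<0$ so that the two negative factors compensate) is exactly the intended argument.

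Case (2) has a genuine gap. The theorem's preamble takes ``$\beta>0$ given as in the previous Theorem'', i.e.\ Theorem \ref{unicité}, whose part (1) is the regime of \emph{large} $\beta$, and condition (2) adds no further restriction on $\beta$ (the analogous statement in \cite{Nghiem} for $p\neq2$ carries no smallness assumption at all). Your implicit-function argument produces the branch $c(\beta)$ only for $\beta$ in an unquantified neighborhood of $0$, and it cannot reach moderate or large $\beta$ without a priori bounds on the branch; so the large-$\beta$ half of the claim is not proved. The missing idea — which also dissolves the ``obstacle'' you describe at the end — is to optimize in the direction adapted to $p$: on $\Sigma:=\{c\in\R^m:\ c_j\geq0,\ \sum_jc_j^2=1\}$, \emph{maximize} $T(c):=\sum_j\mu_jc_j^{2p}+\beta\sum_{j\neq k}c_j^pc_k^p$ when $1<p<2$, but \emph{minimize} it when $p>2$. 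Shifting mass $\epsilon$ into a vanishing component (and renormalizing the others by $\sqrt{1-\epsilon^2}$) changes $T$ by
\[
2\beta\Big(\sum_{k}c_k^p\Big)\epsilon^p-p\,T(c)\,\epsilon^2+o(\epsilon^2)+O(\epsilon^{2p}),
\]
the sum running over the remaining components; since $\epsilon^p\gg\epsilon^2$ for $p<2$ and $\epsilon^p\ll\epsilon^2$ for $p>2$, no maximizer (resp.\ minimizer) can have a zero component, so the optimizer is interior. By Lagrange multipliers and homogeneity of degree $2p$, an interior critical point satisfies $\mu_jc_j^{2p-2}+\beta c_j^{p-2}\sum_{k\neq j}c_k^p=T(c)$ for every $j$, and the rescaling $c\mapsto T(c)^{-1/(2p-2)}c$ converts this into the displayed algebraic system. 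This produces a positive synchronized solution of \eqref{E1} for \emph{every} $\beta>0$ when $p\neq2$, and it degenerates exactly at $p=2$, where the rates $\epsilon^p$ and $\epsilon^2$ coincide — which is why that case needs your explicit computation and its restrictions on $\beta$.
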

Third, a sharp vector-valued Gagliardo-Nirenberg inequality is studied. Denote $C_{N,p,a_{jk}}$ the best constant in the estimate
\begin{equation}\label{Nirenberg}{\small P(\psi):=\frac{1}{2p}\displaystyle\sum_{j,k=1}^m a_{jk}\displaystyle\int_{\R^N} |\psi_j|^p|\psi_k|^p\,dx\leq C\left( \displaystyle\sum_{j=1}^m\|\Delta \psi_j\|^2\right)^{\frac{(p-1)N}{4}}\left(\displaystyle\sum_{j=1}^m\| \psi_j\|^2\right)^{\frac{N - p(N-4)}{4}}.} \end{equation}
The minimal constant is determined by the equation
\begin{equation}\label{vector}
\alpha: = \frac{1}{C_{N, p, a_{jk}}} = \inf_{\psi\in H_{rd}}J(\psi),
\end{equation}
where 
\begin{equation}\label{C}
 J(\psi):=\frac{\Big(\displaystyle\sum_{j=1}^m\|\Delta\psi_j\|^2\Big)^{\frac{(p-1)N}{4}} \Big(\displaystyle\sum_{j=1}^m\|\psi_j\|^2\Big)^{\frac{N - p(N - 4)}{4}} }{P(\psi)}.
\end{equation}
\begin{thm}\label{constant}
 Let $N\geq4$ and $p_*<p<p^*$. The minimum value for \eqref{vector} is achieved and the minimizer $(\psi_1^*,...,\psi_m^*)$ can be selected such that
$$\displaystyle\sum_{j=1}^m\|\Delta\psi_j^*\|^2= 1  = \displaystyle\sum_{j=1}^m\|\psi_j^*\|^2\quad \mbox{and} \quad C_{N, p, a_{jk}}= P(\psi_1^*,...,\psi_m^*). $$
Moreover, if $\beta>0$ is sufficiently small, the minimal constant in this case is given by 
$$C_{N,p}= \min\{\mu_1,...,\mu_m\}\frac{4p \big(N - p(N - 4)\big)^{\frac{(p - 1)N - 4}{4}}}{\big(N(p - 1)\big)^{\frac{(p - 1)N }{4}} \|w\|^{2p - 2}}.
$$
\end{thm}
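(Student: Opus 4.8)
The plan is to read \eqref{vector} as a scale-invariant minimization solved by the direct method, and then, in the weakly coupled regime, to reduce the vector problem to the scalar biharmonic ground state $w$ of \eqref{w}. First I would record the two invariances of $J$: under $\psi_j\mapsto\mu\,\psi_j(\lambda\,\cdot)$ the numerator of \eqref{C} and $P$ pick up the same power of $(\mu,\lambda)$, so $J$ is invariant under both amplitude and dilation scalings. Given any admissible $\psi$, choosing $\lambda^4=\big(\sum_j\|\psi_j\|^2\big)\big/\big(\sum_j\|\Delta\psi_j\|^2\big)$ and the matching $\mu$ normalizes simultaneously $\sum_j\|\Delta\psi_j\|^2=\sum_j\|\psi_j\|^2=1$ without changing $J$. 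Hence I may take a minimizing sequence $(\psi^{(n)})\subset H_{rd}$ for \eqref{vector} with $\sum_j\|\Delta\psi_j^{(n)}\|^2=\sum_j\|\psi_j^{(n)}\|^2=1$; this sequence is bounded in $H$ and, up to a subsequence, converges weakly in $H$ to some $\psi^*$.

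The key compactness input is the compact radial (Strauss) embedding $H^2_{rd}(\R^N)\hookrightarrow L^{2p}(\R^N)$, which is available precisely because $p_*<p<p^*$ forces $2<2p<\frac{2N}{N-4}$. Combined with Hölder's inequality $\int_{\R^N}|\psi_j|^p|\psi_k|^p\,dx\le\|\psi_j\|_{2p}^p\,\|\psi_k\|_{2p}^p$, this yields $P(\psi^{(n)})\to P(\psi^*)$. Since $J(\psi^{(n)})=1/P(\psi^{(n)})\to\alpha$, one gets $P(\psi^*)=1/\alpha>0$, so $\psi^*\neq0$; weak lower semicontinuity of the $H^2$-seminorm and of $\|\cdot\|$ gives $\sum_j\|\Delta\psi_j^*\|^2\le1$ and $\sum_j\|\psi_j^*\|^2\le1$, whence $J(\psi^*)\le\alpha$, forcing equality. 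The normalization step above then lets me rescale $\psi^*$ so that $\sum_j\|\Delta\psi_j^*\|^2=\sum_j\|\psi_j^*\|^2=1$, and $C_{N,p,a_{jk}}=1/\alpha=P(\psi^*)$, as stated. Writing the Euler--Lagrange system for this constrained minimizer identifies a rescaling of $\psi^*$ with a solution of \eqref{E1}.

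For the explicit constant when $\beta$ is small, the plan is to show the minimizer is carried by a single component. Comparing $J$ on a one-component competitor $(0,\dots,w,\dots,0)$ with $J$ on any genuinely coupled configuration, the off-diagonal contributions to $P$ are uniformly $O(\beta)$ on normalized competitors, while distributing the mass among several components strictly lowers the diagonal part relative to concentrating it on the extremal coefficient (for the fully symmetric competitor this shows up as a factor $m^{p-1}>1$). Thus at $\beta=0$ there is a strict gap, and for $\beta$ small the infimum still coincides with the scalar biharmonic sharp constant attached to the extremal diagonal coefficient, realised by $w$. On this scalar problem I would use the Nehari identity $\|\Delta w\|^2+\|w\|^2=\|w\|_{2p}^{2p}$ (testing \eqref{w} against $w$) and the Pohozaev identity $\tfrac{N-4}{2}\|\Delta w\|^2+\tfrac N2\|w\|^2=\tfrac N{2p}\|w\|_{2p}^{2p}$ (testing against $x\cdot\nabla w$). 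These give $\frac{\|\Delta w\|^2}{\|w\|^2}=\frac{N(p-1)}{N-p(N-4)}$ and $\|w\|_{2p}^{2p}=\frac{4p}{N-p(N-4)}\|w\|^2$; substituting into $C_{N,p}=1/\inf J$ and simplifying produces the stated closed form.

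The main obstacle is the weak-coupling reduction: making rigorous that for small $\beta$ no coupled competitor beats the decoupled optimum. The cleanest route is a strict-gap argument rather than a soft compactness one—bounding the cross terms uniformly by $\beta$ times the diagonal energy and invoking a strict superadditivity estimate penalising the splitting of mass over several components, so that the strict inequality at $\beta=0$ survives a small perturbation. The secondary difficulty, the loss of compactness on $\R^N$, is already neutralised by minimizing over the radial class $H_{rd}$, where the embedding into $L^{2p}$ is compact.
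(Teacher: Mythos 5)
Your treatment of the existence part is correct and is essentially the paper's own proof: the same two-parameter scaling normalization, boundedness of the normalized minimizing sequence, the compact radial embedding \eqref{radial} together with H\"older to pass to the limit in $P$, and weak lower semicontinuity to force $\sum_j\|\Delta\psi_j^*\|^2=\sum_j\|\psi_j^*\|^2=1$ and $J(\psi^*)=\alpha$. Your closing scalar computation (Nehari plus Pohozaev identities for $w$) also coincides with the paper's.

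The genuine gap is in the weak-coupling reduction, exactly at the point you yourself flag as the main obstacle, and your proposed fix does not close it. There is \emph{no uniform strict gap at} $\beta=0$: the infimum of $J_{\beta=0}$ over genuinely coupled configurations is not strictly above the single-component value — it equals it, being approached by competitors in which all but one component shrink to zero. Hence "the strict inequality at $\beta=0$ survives a small perturbation" is not a valid deduction; the comparison must be quantitative in the size $t$ of the small component, where splitting costs $\sim t^2$ (from $(1+t^2)^p\approx 1+pt^2$) while the cross term gains $\sim\beta t^p$. Concretely, on the two-component family with $\mu_1=\mu_2=\mu$ and both components proportional to the same profile,
$$f(t)=\frac{(1+t^2)^p}{\mu(1+t^{2p})+2\beta t^p}=\frac{1}{\mu}\Big(1+pt^2-\frac{2\beta}{\mu}\,t^p+o\big(t^{\min(2,p)}\big)\Big),\qquad t\to 0^+,$$
which dips strictly below the single-component value $f(0)=1/\mu$ for small $t$ whenever $p<2$, no matter how small $\beta>0$ is. Since $p<2$ occurs inside the admissible range $p_*<p<p^*$ (for $N=5,6,7$ part of the range, and for $N\geq 8$ all of it), your reduction "the minimizer is carried by a single component for small $\beta$" is false there; note this is also in tension with part (2) of Theorem \ref{unicité}, by which the small-$\beta$ ground state has \emph{all} components nonzero. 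For $p>2$ the idea can be repaired, but only through a two-regime analysis (near versus far from single-component configurations), not through a soft gap-plus-perturbation argument.

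This is also where your route departs from the paper's. The paper never compares against arbitrary coupled competitors: it invokes the uniqueness/structure results (Theorem \ref{unicité} (2) and Theorem \ref{2.4}) to write every component of the minimizer as an explicit multiple of one common rescaled copy of $w$, namely $\psi_j^*=A_j\,w(\gamma x)$, which reduces \eqref{vector} to the finite-dimensional minimization of the algebraic function $f_{m,p}(\beta)$ of the amplitude ratios $t_j=A_j/A_1$, and only then compares with the single-component value. That structural input — not any compactness or perturbation device — is what your proposal is missing, and without it (or the quantitative analysis sketched above, in the range of $p$ where it can work) the closed-form identity for $C_{N,p}$ is not reached.
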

Finally, using the minimal constant $C_{N,p,a_{jk}}$ in the previous vector-valued Gagliardo-Nirenberg inequality, we deduce that the Cauchy problem \eqref{S} has a global solution in the mass critical case $p =p_*$ provided that the initial data is sufficiently small. 
\begin{thm}\label{p}
Let $4<N\leq6$, $p =p_*$ and $\Psi:=(\psi_1,..,\psi_m) \in H$. Then there exists a unique global solution ${\bf u}\in C(\R,H)$ of the Cauchy problem \eqref{S} so long as
$$ \displaystyle\sum_{j=1}^m\|\psi_j\|^2\leq \Big( \frac{\alpha}{2}\Big)^{\frac{N}{4}}.$$
\end{thm}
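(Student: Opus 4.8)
The plan is to convert the smallness hypothesis on the mass into a uniform-in-time bound on $\sum_{j=1}^m\|\Delta u_j(t)\|^2$, following the classical variational scheme of \cite{Weinstein}; by the local theory this bound rules out finite-time blow up. First I would invoke the local well-posedness of \eqref{S} in the energy space established in \cite{rt}, which yields a maximal solution $\mathbf{u}\in C_{T^*}(H)$ together with the blow-up alternative: if $T^*<\infty$ then $\|\mathbf{u}(t)\|_H\to\infty$ as $t\to T^*$ (and symmetrically in negative time). Since $\|\mathbf{u}(t)\|_H^2=\sum_{j=1}^m\|u_j\|^2+\sum_{j=1}^m\|\Delta u_j\|^2$, and since along the flow the mass $M:=\sum_{j=1}^m\|u_j(t)\|^2=\sum_{j=1}^m\|\psi_j\|^2$ and the energy $E(\mathbf{u}(t))=E(\Psi)$ are conserved, it suffices to bound $\sum_{j=1}^m\|\Delta u_j(t)\|^2$ uniformly on $[0,T^*)$.

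Next I would specialize the sharp Gagliardo--Nirenberg inequality \eqref{Nirenberg} to the mass-critical exponent $p=p_*=1+\frac4N$. A direct computation gives $\frac{(p-1)N}{4}=1$ and $\frac{N-p(N-4)}{4}=\frac4N$, so \eqref{Nirenberg} together with the definition \eqref{vector} of $\alpha=1/C_{N,p,a_{jk}}$ reads
$$P(\mathbf{u})\le \frac1\alpha\Big(\sum_{j=1}^m\|\Delta u_j\|^2\Big)\Big(\sum_{j=1}^m\|u_j\|^2\Big)^{\frac4N}=\frac{M^{4/N}}{\alpha}\sum_{j=1}^m\|\Delta u_j\|^2.$$
Inserting this into the conserved energy $E=\frac12\sum_{j=1}^m\|\Delta u_j\|^2-P(\mathbf{u})$ and factoring out the bilaplacian term yields
$$\Big(\frac12-\frac{M^{4/N}}{\alpha}\Big)\sum_{j=1}^m\|\Delta u_j(t)\|^2\le E(\Psi)\qquad\text{for all }t\in[0,T^*).$$

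The hypothesis $M\le(\alpha/2)^{N/4}$ is exactly $M^{4/N}\le\alpha/2$, i.e. the coefficient $\frac12-\frac{M^{4/N}}{\alpha}$ is nonnegative. When the inequality is strict this coefficient is a fixed positive constant, so $\sum_{j=1}^m\|\Delta u_j(t)\|^2\le E(\Psi)\big/\big(\frac12-M^{4/N}/\alpha\big)$ is bounded uniformly in $t$ (note that the same bound at $t=0$ forces $E(\Psi)\ge0$, so the estimate is not vacuous); combined with the conserved mass this bounds $\|\mathbf{u}(t)\|_H$, whence the blow-up alternative gives $T^*=\infty$ in both time directions and the global solution $\mathbf{u}\in C(\R,H)$, with uniqueness inherited from the local theory.

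I expect the genuine difficulty to be the borderline case $M=(\alpha/2)^{N/4}$, where the coefficient degenerates to zero and the a priori inequality collapses to $0\le E(\Psi)$, yielding no control on $\sum_{j=1}^m\|\Delta u_j\|^2$. This is precisely the critical-mass threshold at which the constant in \eqref{Nirenberg} is attained by the profile built from the ground state $w$ of \eqref{w}; in the analogous second-order mass-critical NLS it is exactly the mass at which pseudoconformal blow up occurs. Closing the stated $\le$ at equality therefore seems to require either a refinement of \eqref{Nirenberg} near its extremizers or a separate compactness/concentration argument, and this is the step I regard as the main obstacle.
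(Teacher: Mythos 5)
Your proposal reproduces the paper's argument essentially verbatim: local well-posedness from \cite{rt}, conservation of mass and energy, and the sharp inequality \eqref{Nirenberg} at $p=p_*$ (where $\frac{(p-1)N}{4}=1$) to obtain $\bigl(1-2\mathcal{C}M^{4/N}\bigr)\sum_{j=1}^m\|\Delta u_j(t)\|^2\le 2E$ with $\mathcal{C}=1/\alpha$, whence a uniform $H^2$ bound and $T^*=\infty$ when the mass is strictly below threshold. Your concern about the borderline case $M=(\alpha/2)^{N/4}$ is justified but does not distinguish your proof from the paper's: the paper's own proof also concludes only under the strict inequality $\sum_{j=1}^m\|\psi_j\|^2<\bigl(\tfrac{1}{2\mathcal{C}}\bigr)^{N/4}$, so the non-strict ``$\leq$'' in the statement of the theorem is not actually covered by the argument given there either.
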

In what follows, we collect some intermediate estimates.
\subsection{Tools}
Let us recall some useful Sobolev embeddings \cite{Adams,Lions}.
\begin{prop}\label{injection}
The two first injections are continuous and the last one is compact.
\begin{enumerate}
\item $ W^{s,p}(\R^N)\hookrightarrow L^q(\R^N)$ whenever
$1<p<q<\infty, \quad s>0\quad \mbox{and}\quad \frac{1}{p}\leq \frac{1}{q} + \frac {s}{N};$
\item $W^{s,p_1}(\R^N)\hookrightarrow W^{s - N(\frac{1}{p_1} - \frac{1}{p_2}),p_2}(\R^N)$ whenever $1\leq p_1\leq p_2 \leq \infty;$
\item
for $2<p< 2 p^*,$ 
\begin{equation}\label{radial} H_{rd}^2(\R^N)\hookrightarrow L^p(\R^N).\end{equation}
\end{enumerate}
\end{prop}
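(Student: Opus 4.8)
The plan is to derive the two continuous embeddings (1) and (2) from a single mapping property of the Bessel potential operator $J_\lambda:=(I-\Delta)^{-\lambda/2}$, and to obtain the compact embedding (3) by combining local Rellich--Kondrachov compactness with a Strauss-type tail estimate that exploits radial symmetry.

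The core lemma I would isolate is the following: $J_\lambda$ is convolution with the Bessel kernel $G_\lambda$, which is positive, integrable for $\lambda>0$, comparable to the Riesz kernel $|x|^{\lambda-N}$ near the origin, and exponentially decaying at infinity; consequently, for $1<r<q<\infty$ and $\lambda>0$ with $\frac1q=\frac1r-\frac\lambda N>0$ one has $\|J_\lambda g\|_q\lesssim\|g\|_r$, while $\|J_\lambda g\|_r\lesssim\|g\|_r$ for every $\lambda\ge0$. I would prove this by splitting $G_\lambda=G_\lambda\mathbf 1_{\{|x|\le1\}}+G_\lambda\mathbf 1_{\{|x|>1\}}$: the singular piece is dominated by $|x|^{\lambda-N}$, so the Hardy--Littlewood--Sobolev inequality yields the $L^r\to L^q$ bound, whereas the tail lies in $L^1$ (indeed in every $L^a$), so Young's inequality controls it; the $L^r\to L^r$ bound is immediate from Young since $G_\lambda\in L^1$.

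With this lemma in hand, (2) is the endpoint case: setting $\sigma=s-N(\frac1{p_1}-\frac1{p_2})$ and $g=(I-\Delta)^{s/2}f\in L^{p_1}$, the assertion $\|f\|_{W^{\sigma,p_2}}\lesssim\|f\|_{W^{s,p_1}}$ reduces to $\|J_{s-\sigma}g\|_{p_2}\lesssim\|g\|_{p_1}$ with $s-\sigma=N(\frac1{p_1}-\frac1{p_2})$, which is exactly the lemma with $\lambda=s-\sigma$, $r=p_1$, $q=p_2$ (and merely Young's bound when $p_1=p_2$). For (1) I write $f=J_s g$ with $g\in L^p$; when $sp<N$ the lemma gives $f\in L^{q^\sharp}$ for the critical $q^\sharp$ defined by $\frac1{q^\sharp}=\frac1p-\frac sN$, while Young gives $f\in L^p$, and since the hypothesis $\frac1p\le\frac1q+\frac sN$ forces $p<q\le q^\sharp$, the elementary interpolation $L^p\cap L^{q^\sharp}\subset L^q$ produces $\|f\|_q\lesssim\|f\|_p^\theta\|f\|_{q^\sharp}^{1-\theta}\lesssim\|g\|_p$; when $sp\ge N$ the kernel lies in $L^{p'}$, so $f\in L^\infty$ and the same interpolation covers every $q>p$.

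Finally, for (3) the boundedness $H^2_{rd}\hookrightarrow L^p$ for $2<p<2p^*=\frac{2N}{N-4}$ is the instance $s=2$, $r=2$ of part (1). To upgrade to compactness, take a bounded sequence $(u_n)\subset H^2_{rd}$ with $u_n\rightharpoonup u$ in $H^2$ along a subsequence. On each ball $B_R$, the Rellich--Kondrachov theorem gives $u_n\to u$ in $L^p(B_R)$ since $p$ is subcritical. For the exterior, I invoke the Strauss radial decay estimate $|v(x)|\lesssim|x|^{-\frac{N-1}2}\|v\|_{H^1}$, valid for radial $v\in H^1(\R^N)$, whence
\[
\int_{|x|>R}|u_n|^p\,dx\le\|u_n\|_{L^\infty(|x|>R)}^{p-2}\|u_n\|_2^2\lesssim R^{-\frac{(N-1)(p-2)}2}\longrightarrow0\quad\text{as }R\to\infty,
\]
uniformly in $n$ because $p>2$. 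Combining the interior strong convergence with this uniform smallness of the tails yields $u_n\to u$ strongly in $L^p(\R^N)$, which is the asserted compactness. I expect the only genuinely delicate point to be this last step: the passage from a continuous to a compact embedding is false on $L^p(\R^N)$ without the radial constraint (translates of a fixed bump are bounded yet not precompact), so the whole argument hinges on the Strauss decay estimate, while (1)--(2) are classical once the Bessel-kernel lemma is established.
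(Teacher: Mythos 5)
The paper offers no proof of this proposition at all: it is stated as a recollection of classical embedding theorems and simply cited to \cite{Adams,Lions}, so there is no internal argument to compare yours against. Your argument is the standard self-contained route --- the Bessel kernel $G_\lambda$ combined with Hardy--Littlewood--Sobolev and Young for the continuous embeddings, and Rellich--Kondrachov on balls combined with the Strauss radial decay estimate for the compact one --- and this is exactly the content of the cited references. In particular, your part (3) is correct and complete (including the crucial uniformity in $n$ of the tail estimate, which is what makes the radial hypothesis indispensable), and part (3) is the only portion of the proposition the paper actually invokes later, always with $s=2$, $p=2$.

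Three borderline points deserve attention if you want your proof to cover the statement as literally written. First, in part (1) your case $sp\ge N$ asserts $G_s\in L^{p'}$; this fails at the exact borderline $sp=N$ with $s<N$, since there $|G_s(x)|^{p'}\sim|x|^{(s-N)p'}=|x|^{-N}$ near the origin is not locally integrable --- and indeed $W^{N/p,p}\not\hookrightarrow L^\infty$. The conclusion for finite $q$ is nevertheless true: given $q<\infty$ with $\frac1q\ge\frac1p-\frac sN=0$, pick $s'$ with $N\bigl(\frac1p-\frac1q\bigr)\le s'<s$, so that $s'p<N$ and $\frac1q\ge\frac1p-\frac{s'}N$, and apply your subcritical case after the trivial embedding $W^{s,p}\hookrightarrow W^{s',p}$. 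Second, part (2) as stated allows $p_1=1$ and $p_2=\infty$, which your HLS lemma cannot reach (and where the stated embedding can genuinely fail, e.g. $s=N/p_1$, $p_2=\infty$ gives the false claim $W^{N/p_1,p_1}\hookrightarrow L^\infty$); your proof covers $1<p_1\le p_2<\infty$, which is what is true and all the paper needs. Third, writing $f=J_sg$ identifies $W^{s,p}$ with the Bessel potential space; this is legitimate for integer $s$ with $1<p<\infty$ (Calder\'on's theorem) and for $p=2$, but for fractional $s$ and $p\neq2$ the Sobolev--Slobodeckij space $W^{s,p}$ is a different (Besov) space, so strictly speaking you should either adopt the Bessel potential space as the definition of $W^{s,p}$ or insert the standard embeddings between the two scales. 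None of these caveats affects the way the proposition is used in the paper.
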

Finally, we give the so-called generalized Pohozaev identity \cite{sl1}.
\begin{prop}
$\Psi \in H$ is solution to \eqref{E} if and only if $S'(\Psi)=0.$ Moreover, in such a case 
$$K_{\alpha,\beta}(\Psi)=0,\quad\mbox{for any}\quad (\alpha,\beta)\in\R^2.$$
\end{prop}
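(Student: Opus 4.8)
The plan is to prove the two assertions in turn: first the variational characterization $\Psi$ solves \eqref{E} $\iff S'(\Psi)=0$, and then the vanishing of $K_{\alpha,\beta}(\Psi)$ along solutions. I would begin by verifying that $S\in C^1(H,\R)$. The quadratic part is visibly smooth, and for the nonlinear term $P$, H\"older's inequality gives $\int_{\R^N}|u_ju_k|^p\,dx\le\|u_j\|_{2p}^p\,\|u_k\|_{2p}^p$, so the Sobolev embedding $H^2(\R^N)\hookrightarrow L^{2p}(\R^N)$ of Proposition \ref{injection} (available since $p<p^*$ forces $2p\le\frac{2N}{N-4}$) makes $P$ well defined and of class $C^1$. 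Differentiating, the $\ell$-th component of the gradient of the quadratic part is $\Delta^2 u_\ell+u_\ell$ (integrating $\|\Delta u_\ell\|^2$ by parts twice), while differentiating $P$ and using the symmetry $a_{jk}=a_{kj}$ to merge the $j=\ell$ and $k=\ell$ contributions produces $\sum_k a_{\ell k}|u_k|^p|u_\ell|^{p-2}u_\ell$. Hence $S'(\Psi)=0$ reads, componentwise, exactly as \eqref{E}, which settles the equivalence.

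For the second assertion, I would display $K_{\alpha,\beta}$ as a combination of two scalar identities that each vanish at a critical point. Regrouping the definition of $K_{\alpha,\beta}$ according to the powers of $\alpha$ and $\beta$ gives $2K_{\alpha,\beta}({\bf u})=2\alpha\,Q_1({\bf u})+\beta\,Q_2({\bf u})$, where $Q_1({\bf u}):=\sum_j(\|\Delta u_j\|^2+\|u_j\|^2)-\sum_{j,k}a_{jk}\int_{\R^N}|u_ju_k|^p\,dx$ and $Q_2({\bf u}):=(N-4)\sum_j\|\Delta u_j\|^2+N\sum_j\|u_j\|^2-\frac{N}{p}\sum_{j,k}a_{jk}\int_{\R^N}|u_ju_k|^p\,dx$. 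The first is a Nehari-type quantity: pairing the equation against $\Psi$ and integrating by parts yields $Q_1(\Psi)=\langle S'(\Psi),\Psi\rangle$, so $S'(\Psi)=0$ forces $Q_1(\Psi)=0$ at once. The second is the Pohozaev quantity, which I would extract from the dilation $\Psi^\lambda:=(\psi_1(\lambda\,\cdot),\ldots,\psi_m(\lambda\,\cdot))$. Using $\|\psi_j(\lambda\,\cdot)\|^2=\lambda^{-N}\|\psi_j\|^2$, $\|\Delta\psi_j(\lambda\,\cdot)\|^2=\lambda^{4-N}\|\Delta\psi_j\|^2$ and $\int_{\R^N}|\psi_j(\lambda\,\cdot)\psi_k(\lambda\,\cdot)|^p\,dx=\lambda^{-N}\int_{\R^N}|\psi_j\psi_k|^p\,dx$, one finds $\frac{d}{d\lambda}S(\Psi^\lambda)\big|_{\lambda=1}=-\tfrac12 Q_2(\Psi)$, while the chain rule gives $\frac{d}{d\lambda}S(\Psi^\lambda)\big|_{\lambda=1}=\langle S'(\Psi),\partial_\lambda\Psi^\lambda|_{\lambda=1}\rangle=0$. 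Hence $Q_2(\Psi)=0$, and therefore $K_{\alpha,\beta}(\Psi)=\alpha\,Q_1(\Psi)+\tfrac12\beta\,Q_2(\Psi)=0$ for every $(\alpha,\beta)\in\R^2$.

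The main obstacle is the rigor of this Pohozaev step: the chain-rule identity $\frac{d}{d\lambda}S(\Psi^\lambda)=\langle S'(\Psi^\lambda),\partial_\lambda\Psi^\lambda\rangle$ requires $\lambda\mapsto\Psi^\lambda$ to be $C^1$ into $H$, equivalently that $x\cdot\nabla\psi_j\in L^2$ and $\Delta(x\cdot\nabla\psi_j)\in L^2$, which is not automatic from $\Psi\in H$ alone. I would circumvent this by first upgrading the regularity and decay of any solution of \eqref{E} via elliptic bootstrap for the operator $\Delta^2+1$ (the right-hand side of \eqref{E} lies in the relevant Lebesgue spaces thanks to the embeddings above), so that the scaling family is genuinely differentiable in $H$ and the formal computation is legitimate. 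Alternatively, and more economically, one may invoke the generalized Pohozaev identity of \cite{sl1} directly, which supplies precisely the relation $Q_2(\Psi)=0$ needed here. Once this is secured, the only remaining ingredients are the elementary scaling derivatives and the regrouping recorded above, all of which are routine.
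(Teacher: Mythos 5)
Your proposal is correct, but note that the paper itself gives \emph{no} proof of this proposition: it is stated as the ``generalized Pohozaev identity'' and attributed wholesale to Le Coz \cite{sl1}, so your fallback option (b) --- just invoking \cite{sl1} --- is literally the paper's entire argument. What you do differently is supply the missing proof: the $C^1$ computation identifying $S'(\Psi)=0$ with the weak formulation of \eqref{E}, and the decomposition $K_{\alpha,\beta}(\Psi)=\alpha\,Q_1(\Psi)+\tfrac{\beta}{2}\,Q_2(\Psi)$, where $Q_1(\Psi)=\langle S'(\Psi),\Psi\rangle$ is the Nehari functional and $Q_2$ is the Pohozaev functional produced by the dilation $\psi_j(\lambda\,\cdot)$; your scaling exponents and the regrouping of the paper's definition of $K_{\alpha,\beta}$ both check out, and since $Q_1(\Psi)=Q_2(\Psi)=0$ the conclusion indeed holds for every $(\alpha,\beta)\in\R^2$, not just nonnegative pairs. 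This buys self-containedness, and you correctly locate the one genuine difficulty --- justifying $\frac{d}{d\lambda}S(\Psi^\lambda)\big|_{\lambda=1}=0$ when $\Psi$ is merely in $H$ --- which is exactly where the real content of \cite{sl1} lies. One caution about your repair (a): elliptic bootstrap for $\Delta^2+1$ upgrades \emph{regularity}, but differentiability of $\lambda\mapsto\Psi^\lambda$ in $H$ requires spatial \emph{decay} (namely $x\cdot\nabla\psi_j\in L^2$ and similarly for two more derivatives), which regularity alone does not provide; one needs in addition decay estimates (e.g., via the exponentially decaying kernel of $(\Delta^2+1)^{-1}$) or the standard truncation-over-balls argument in which the boundary terms are shown to vanish along a sequence of radii. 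Since your option (b) covers this point --- and is what the paper does --- the proof stands.
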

\section{The stationary problem }
The goal of this section is to prove that the elliptic problem \eqref{E} associated to \eqref{S} has a ground state which is a unique vector in some cases. Let us start with some notations. For $u_j\in H^2$ and ${\bf u} :=(u_1,...,u_m),$ we denote the actions
\begin{gather*}
S^j(u_j) := \frac{1}{2}\Big( \|\Delta u_j\|^2 +  \| u_j\|^2   \Big);\\
S({\bf u}):= \frac{1}{2}\displaystyle\sum_{j=1}^m\| u_j\|_{H^2}^2-\frac{1}{2p}\displaystyle\sum_{j,k=1}^m a_{jk} \displaystyle\int_{\R^N}|u_j u_k|^{p}\,dx.
\end{gather*}
For $\lambda,\, \alpha, \,\beta\in \R,$ we introduce the scaling
$$( u_j^\lambda)^{\alpha,\beta}:= e^{\alpha\lambda}u_j(e^{-\beta \lambda}.)$$
and the differential operator
$$ \pounds_{\alpha,\beta}:H^2\to H^2,\quad u_j\mapsto \partial_\lambda((u_j^\lambda)^{\alpha,\beta})_{|\lambda=0}.$$
We extend the previous operator as follows, if $A:H^2(\R^N)\to \R,$ then 
$$\pounds_{\alpha,\beta}A(u_j):= \partial_\lambda (A((u_j^\lambda)^{\alpha,\beta}))_{|\lambda=0}.$$
Denote also, for $\alpha,\beta\in \R$ and $u_j\in H^2,$ the constraint
$$ K_{\alpha,\beta}^j(u_j):= \pounds_{\alpha,\beta}(S(u_j))= \frac{1}{2}( 2\alpha + (N - 4)\beta)\|\Delta u_j\|^2 + \frac{1}{2}( 2\alpha + N\beta)\|u_j\|^2$$
and
\begin{eqnarray*}
K_{\alpha,\beta}({\bf u})&:= &\partial_\lambda\big(S(({\bf u}^\lambda)^{\alpha,\beta})\big )_{|\lambda = 0}\\
&=& \frac{1}{2}\displaystyle\sum_{j=1}^m\Big((2\alpha + (N - 4 )\beta) \|\Delta u_j\|^2 + (2\alpha + N \beta) \| u_j\|^2   \Big) \\&- &\frac{1}{2p}\displaystyle\sum_{j,k=1}^m a_{jk}\displaystyle\int_{\R^N}(2p\alpha + N \beta)|u_j u_k|^{p}\,dx\\
&:=&\frac{1}{2}\displaystyle\sum_{j=1}^m K_{\alpha,\beta}^Q(u_j) - \frac{1}{2p}\displaystyle\sum_{j,k=1}^m a_{jk}\displaystyle\int_{\R^N}(2p\alpha + N \beta)|u_j u_k|^{p}\,dx.
\end{eqnarray*}
Finally, we introduce the quantities
$$H_{\alpha,\beta}^j(u_j) := S^j(u_j) - \frac{1}{2\alpha + N\beta}K_{\alpha,\beta}^j(u_j)= \frac{2\beta}{2\alpha + N\beta}||\Delta u_j\|^2$$
and
\begin{eqnarray*}
H_{\alpha,\beta}({\bf u})&:=& S({\bf u}) - \frac{1}{2\alpha + N\beta}K_{\alpha,\beta}({\bf u})\\
&=& \frac{1}{2\alpha +N\beta }\Big[\displaystyle\sum_{j=1}^m 2\beta \|\Delta u_j\|^2  + \alpha (1-\frac{1}{p})\displaystyle\sum_{j,k=1}^m a_{jk}\displaystyle\int_{\R^N}|u_j u_k|^{p}\,dx\Big].
\end{eqnarray*}
\subsection{Existence of ground state}
Now, we prove Theorem \ref{th2} about existence of a ground state solution to the stationary problem \eqref{E}.
\begin{rem} 
\begin{enumerate}
\item[(i)] The proof of the Theorem \ref{th2} is based on several lemmas;
\item[(ii)]we write, for easy notation, $u_j^\lambda:= (u_j^\lambda)^{\alpha,\beta},\, K:= K_{\alpha,\beta},\, K^Q:= K_{\alpha,\beta}^Q,\, \pounds:= \pounds_{\alpha, \beta}\, \mbox{and}\, H:= H_{\alpha,\beta}.$
\end{enumerate}
\end{rem}
\begin{lem} Let $(\alpha,\beta)\in \R_+^2.$ Then
\begin{enumerate}
\item $\min \big(\pounds H({\bf u}), H({\bf u})\big)\geq 0$ for all $0 \neq {\bf u} \in H.$
\item $\lambda \mapsto H({\bf u}^\lambda)$ is increasing.
\end{enumerate}
\end{lem}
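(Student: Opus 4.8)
The plan is to make everything explicit through the scaling, and then to read off both assertions from a single formula. First I would record how each building block of $H$ transforms under ${\bf u}\mapsto{\bf u}^\lambda$, where $(u_j^\lambda)^{\alpha,\beta}=e^{\alpha\lambda}u_j(e^{-\beta\lambda}\cdot)$. A change of variables $y=e^{-\beta\lambda}x$ gives
$$\|u_j^\lambda\|^2=e^{(2\alpha+N\beta)\lambda}\|u_j\|^2,\qquad \|\Delta u_j^\lambda\|^2=e^{(2\alpha+(N-4)\beta)\lambda}\|\Delta u_j\|^2,$$
together with $\int_{\R^N}|u_j^\lambda u_k^\lambda|^p\,dx=e^{(2p\alpha+N\beta)\lambda}\int_{\R^N}|u_j u_k|^p\,dx$ (the factor $e^{-4\beta\lambda}$ in the second identity comes from the two derivatives in $\Delta$).

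Next I would insert these into the closed form
$$H({\bf u})=\frac{1}{2\alpha+N\beta}\Big[\sum_{j=1}^m 2\beta\|\Delta u_j\|^2+\alpha(1-\tfrac1p)\sum_{j,k=1}^m a_{jk}\int_{\R^N}|u_ju_k|^p\,dx\Big]$$
established just above the statement. Only the two nonconstant-in-$\lambda$ types of terms survive, so $H({\bf u}^\lambda)=A\,e^{a\lambda}+B\,e^{b\lambda}$ with exponents $a=2\alpha+(N-4)\beta$, $b=2p\alpha+N\beta$ and coefficients
$$A=\frac{2\beta}{2\alpha+N\beta}\sum_{j=1}^m\|\Delta u_j\|^2,\qquad B=\frac{\alpha(1-\frac1p)}{2\alpha+N\beta}\sum_{j,k=1}^m a_{jk}\int_{\R^N}|u_ju_k|^p\,dx.$$

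Then the conclusion is pure sign bookkeeping. Under the standing hypotheses $N\ge4$, $p>p_*>1$, $a_{jk}>0$ and $(\alpha,\beta)\in\R_+^2\setminus\{(0,0)\}$, the denominator $2\alpha+N\beta$ is strictly positive, the exponents $a,b$ are nonnegative (here $N-4\ge0$ uses $N\ge4$), and both coefficients $A,B$ are nonnegative (here $1-\tfrac1p>0$ uses $p>1$). This already gives $H({\bf u})=A+B\ge0$, the first half of (1). Differentiating, $\partial_\lambda H({\bf u}^\lambda)=aA\,e^{a\lambda}+bB\,e^{b\lambda}\ge0$ for every $\lambda$, which is exactly the monotonicity claim (2); evaluating at $\lambda=0$ yields $\pounds H({\bf u})=aA+bB\ge0$, the remaining half of (1). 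Alternatively, to get (2) directly from the $\pounds H\ge0$ bound, I would invoke the one-parameter group property $({\bf u}^\lambda)^\mu={\bf u}^{\lambda+\mu}$, which gives $\partial_\lambda H({\bf u}^\lambda)=\pounds H({\bf u}^\lambda)\ge0$ since the latter is nonnegative on every element ${\bf u}^\lambda\in H$.

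I do not expect a genuine obstacle: the argument is entirely a scaling computation. The only points needing care are the cancellation of the $\|u_j\|^2$ contributions that produces the clean formula for $H$, and the verification that $N\ge4$ and $p>1$ force the exponents $a,b$ and the coefficients $A,B$ to have the correct sign. One mild caveat is that ``increasing'' must be read as nondecreasing (e.g. when $N=4$, $\alpha=0$ the map $\lambda\mapsto H({\bf u}^\lambda)$ is constant), which is exactly what $\partial_\lambda H({\bf u}^\lambda)\ge0$ records and is all that the potential-well argument will require.
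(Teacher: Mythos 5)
Your proof is correct, and it reaches the same conclusions by a genuinely more explicit route than the paper's. The paper never writes $H({\bf u}^\lambda)$ in closed form; instead it argues by operator algebra: it factors
$\pounds H({\bf u})=\tfrac{-1}{2\alpha+N\beta}\big(\pounds-(2\alpha+(N-4)\beta)\big)\big(\pounds-(2\alpha+N\beta)\big)S({\bf u})+(2\alpha+(N-4)\beta)H({\bf u})$,
notes that the two quadratic terms of $S$ are annihilated by the matching factors (the same eigenvalue property that underlies your exponent computation), so only the nonlinear term survives, carrying the manifestly nonnegative coefficient $\tfrac{2\alpha(p-1)(2\alpha(p-1)+4\beta)}{2p(2\alpha+N\beta)}$; then, for part (2), it must separately verify by a chain-rule/group-property computation that $\partial_\lambda H({\bf u}^\lambda)=\pounds H({\bf u}^\lambda)$ --- exactly the alternative you sketch at the end. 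Your diagonalization $H({\bf u}^\lambda)=Ae^{a\lambda}+Be^{b\lambda}$ with $a,b,A,B\ge 0$ delivers both statements at once by inspection, and the two computations agree exactly: $\pounds H({\bf u})=aA+bB=(2\alpha+(N-4)\beta)H({\bf u})+(b-a)B$, whose second term is precisely the paper's lower bound. What your version buys is transparency and the correct reading of ``increasing'': you see at a glance that for $N=4$, $\alpha=0$ the map $\lambda\mapsto H({\bf u}^\lambda)$ is constant, so the statement can only mean nondecreasing --- a degeneracy the paper itself relies on later (it uses $H_{0,1}(\phi_n^\lambda)=H_{0,1}(\phi_n)$ in the second case of the proof of Theorem \ref{th2}). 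What the paper's formalism buys is reuse: the same factorization is invoked again in the third step of that proof to show $\pounds^2 S(\phi)<0$ and eliminate the Lagrange multiplier. One caveat common to both arguments: they require $2\alpha+N\beta>0$, i.e. $(\alpha,\beta)\neq(0,0)$, which the lemma's wording formally allows but the definition of $H_{\alpha,\beta}$ already excludes; your explicit remark on this is a point in your write-up's favor.
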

\begin{proof}
We have
$$H({\bf u})\geq H^j(u_j) = \frac{2\beta}{2\alpha + N\beta}\|\Delta u_j\|^2\geq 0.$$
 Moreover, with a direct computation
{\small\begin{eqnarray*}
\pounds H({\bf u}) &= &\pounds \big(1 - \frac{\pounds}{2\alpha + N\beta}\big)S({\bf u})\\
&=& \frac{-1}{2\alpha + N \beta}\big(\pounds - (2\alpha + (N - 4)\beta)\big)\big(\pounds - (2\alpha + N \beta)\big)S({\bf u}) +( 2\alpha + (N - 4)\beta)\big(1 - \frac{\pounds}{2\alpha + N\beta}\big)S({\bf u})\\
&=& \frac{-1}{2\alpha + N \beta}\big(\pounds - (2\alpha + (N - 4)\beta)\big)\big(\pounds - (2\alpha + N \beta)\big)S({\bf u}) +( 2\alpha + (N - 4)\beta)H({\bf u}).
\end{eqnarray*}}
Since $\big (\pounds - (2\alpha + (N - 4)\beta)\big) \|\Delta u_j\|^2 = \big(\pounds - (2\alpha + N\beta)\big)\|u_j\|^2 = 0,$ we have
$\big (\pounds - (2\alpha + (N - 4)\beta)\big)  \big(\pounds - (2\alpha + N\beta)\big)\|u_j\|_{H^2}^2 =0$ and
\begin{eqnarray*}
\pounds H({\bf u})&\geq&\frac{-1}{2\alpha + N \beta}\big(\pounds - (2\alpha + (N - 4)\beta)\big)\big(\pounds - (2\alpha + N \beta)\big)\Big(\frac{-1}{2p}\displaystyle\sum_{j,k=1}^m a_{jk}\displaystyle\int_{\R^N}|u_ju_k|^p\,dx\Big)\\
&\geq& \frac{1}{2p}\frac{2\alpha (p - 1)}{2\alpha + N\beta}\big( 2\alpha(p - 1) + 4\beta \big)\displaystyle\sum_{j,k=1}^m a_{jk}\displaystyle\int_{\R^N}|u_ju_k|^p\,dx\geq 0.
\end{eqnarray*}
The last point is a consequence of the equality $ \partial_\lambda H({\bf u}^\lambda) = \pounds H({\bf u}^\lambda).$ In fact
\begin{eqnarray*}
\partial_\lambda H({\bf u}^\lambda)& =& \partial_\lambda H(e^{\alpha\lambda}{\bf u}(e^{-\beta\lambda}.))\\
&=& H^{\prime}(e^{\alpha\lambda}{\bf u}(e^{-\beta\lambda}.))\Big(\alpha e^{\alpha\lambda}{\bf u}(e^{-\beta\lambda}.) - \beta e^{-\beta \lambda} e^{\alpha\lambda}{\bf u}^{\prime}(e^{-\beta\lambda}.)\Big)\\
&=& H^{\prime}({\bf u}^\lambda)\Big(\alpha {\bf u}^\lambda - \beta e^{-\beta \lambda} {{\bf u}^{\prime}}^{\lambda}\Big)
\end{eqnarray*}
and
\begin{eqnarray*}
\pounds H({\bf u}^\lambda) &=& \partial_\mu (H({\bf u}^\lambda)^\mu)_{|\mu = 0}\\
&=& \partial_\mu\Big (H( e^{\alpha\mu}{\bf u}^\lambda(e^{-\beta\mu}.)\Big)_{\big|\mu = 0}\\
&=& \partial_\mu\Big (H( e^{\alpha(\mu + \lambda)}{\bf u}(e^{-\beta(\mu + \lambda)}.)\Big)_{\big|\mu = 0}\\
&=& H^{\prime}\big(e^{\alpha(\mu + \lambda)}{\bf u}(e^{-\beta(\mu + \lambda)})\big)\Big(\alpha e^{\alpha(\mu + \lambda)}{\bf u}(e^{-\beta(\mu + \lambda)}) - \beta e^{\alpha(\mu + \lambda)}e^{\alpha(\mu + \lambda)}{\bf u}^{\prime}(e^{-\beta(\mu + \lambda)}.)\Big)_{\big|\mu=0}\\
&=& H^{\prime}({\bf u}^\lambda)\Big(\alpha u_j^\lambda - \beta e^{-\beta \lambda} {{\bf u}^{\prime}}^{\lambda}\Big).
\end{eqnarray*}
\end{proof}
The next intermediate result is the following.
\begin{lem} \label{K>0} Let $(0,0)\neq(\alpha,\beta)\in \R_+^2$ satisfying $(N,\alpha)\neq (4,0)$ and $0\, \neq\, (u_1^n,...,u_m^n)$ be a bounded sequence of $H$ such that
$$ \lim_n\big(\displaystyle\sum_{j=1}^m K^Q(u_j^n)\big) =0.$$
Then there exists $n_0\in \N$ such that $K(u_1^n,...,u_m^n)>0$ for all $n\geq n_0.$
\end{lem}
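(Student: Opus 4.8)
The plan is to exploit that the quadratic part $\sum_j K^Q(u_j^n)$ of $K$ is comparable to the full $H$-norm squared, while the nonlinear part is of order $2p>2$ in that norm; hence near the origin the quadratic part dominates and forces $K>0$.

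First I would verify that, under the standing hypotheses, both coefficients in $K^Q$ are strictly positive. Since $\alpha,\beta\geq0$ and $(\alpha,\beta)\neq(0,0)$ we have $2\alpha+N\beta>0$. For $2\alpha+(N-4)\beta$: when $N>4$ positivity is automatic (either $\alpha>0$, or $\alpha=0$ forces $\beta>0$ and then $(N-4)\beta>0$); when $N=4$ the hypothesis $(N,\alpha)\neq(4,0)$ gives $\alpha>0$, so $2\alpha+(N-4)\beta=2\alpha>0$. Writing $X_n:=\sum_j\|\Delta u_j^n\|^2$ and $Y_n:=\sum_j\|u_j^n\|^2$, the relation
$$\sum_{j=1}^m K^Q(u_j^n)=(2\alpha+(N-4)\beta)X_n+(2\alpha+N\beta)Y_n\longrightarrow0$$
together with the positivity just established yields $X_n\to0$ and $Y_n\to0$, hence $R_n:=X_n+Y_n=\sum_j\|u_j^n\|_{H^2}^2\to0$; note that $R_n>0$ for every $n$ because $(u_1^n,\dots,u_m^n)\neq0$.

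Next I would estimate the nonlinear part. From the definitions, $2K({\bf u}^n)=\sum_jK^Q(u_j^n)-2(2p\alpha+N\beta)P({\bf u}^n)$, where $P\geq0$ and $2p\alpha+N\beta\geq0$. By H\"older, $\int_{\R^N}|u_j^nu_k^n|^p\,dx\leq\|u_j^n\|_{2p}^p\|u_k^n\|_{2p}^p$; since $p<p^*$, the subcritical embedding $H^2(\R^N)\hookrightarrow L^{2p}(\R^N)$ from Proposition \ref{injection} gives $\|u_j^n\|_{2p}\lesssim\|u_j^n\|_{H^2}\leq R_n^{1/2}$, whence $P({\bf u}^n)\lesssim R_n^{\,p}$. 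Collecting the two steps with $c:=\min\{2\alpha+(N-4)\beta,\,2\alpha+N\beta\}>0$,
$$2K({\bf u}^n)\geq cR_n-CR_n^{\,p}=R_n\big(c-CR_n^{\,p-1}\big),$$
and since $p>1$ and $R_n\to0$ the bracket is eventually positive, giving $K({\bf u}^n)>0$ for all $n$ large enough.

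I do not expect a genuine obstacle here: the whole argument is the standard coercivity mechanism by which a positive quadratic form dominates a $2p$-homogeneous term with $p>1$ as the norm tends to zero. The only delicate bookkeeping is to see that the hypothesis $(N,\alpha)\neq(4,0)$ is exactly what keeps the coefficient of $\|\Delta u_j\|^2$ positive in the borderline dimension $N=4$, and that $p<p^*$ is precisely the threshold making $H^2\hookrightarrow L^{2p}$ continuous.
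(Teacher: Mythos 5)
Your proof is correct, and it rests on the same coercivity mechanism as the paper's: the positive definite quadratic part $\sum_j K^Q(u_j^n)$ dominates the $2p$-homogeneous nonlinear part once the sequence is small. The only genuine difference is the inequality used to control the nonlinearity. The paper bounds $\sum_{j,k}a_{jk}\int|u_j^nu_k^n|^p\,dx$ by the vector Gagliardo--Nirenberg inequality \eqref{Nirenberg}, i.e.\ by $C\bigl(\sum_j\|\Delta u_j^n\|^2\bigr)^{\frac{(p-1)N}{4}}\bigl(\sum_j\|u_j^n\|^2\bigr)^{\frac{N-p(N-4)}{4}}$; there the hypothesis $p>p_*$ is exactly what makes the first exponent exceed $1$, so that the nonlinear term is $o\bigl(\sum_j\|\Delta u_j^n\|^2\bigr)=o\bigl(\sum_jK^Q(u_j^n)\bigr)$ and hence $K\simeq\frac12\sum_jK^Q(u_j^n)$. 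You instead use H\"older together with the continuous embedding $H^2\hookrightarrow L^{2p}$ (valid since $p<p^*$), getting $P({\bf u}^n)\lesssim R_n^{\,p}$ with $R_n:=\sum_j\|u_j^n\|_{H^2}^2\to0$, after which only $p>1$ is needed to close the argument. What the paper's route buys is coherence with the rest of the manuscript, where \eqref{Nirenberg} is the central tool; what your route buys is elementarity and a formally weaker hypothesis, since your argument proves the lemma in the full range $1<p<p^*$ rather than only $p_*<p<p^*$. Two further points in your favor: you spell out why $\min\{2\alpha+(N-4)\beta,\,2\alpha+N\beta\}>0$, which is precisely where the assumption $(N,\alpha)\neq(4,0)$ enters (the paper asserts this positivity without comment), and your final estimate $2K({\bf u}^n)\ge R_n\bigl(c-CR_n^{\,p-1}\bigr)$ combined with $R_n>0$ delivers the \emph{strict} inequality $K>0$ cleanly, whereas the paper's displayed conclusion reads ``$\geq0$'' and implicitly uses that $\sum_jK^Q(u_j^n)>0$ for nonzero data.
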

\begin{proof}
We have,
$$K(u_1^n,...,u_m^n) =\frac{1}{2}\displaystyle\sum_{j=1}^m K^Q(u_j^n)  - \frac{(2p\alpha + N \beta)}{2p}\displaystyle\sum_{j,k=1 }^m a_{jk}\displaystyle\int_{\R^N}|u_j^n u_k^n|^{p}\,dx.$$
Using the interpolation inequality \eqref{Nirenberg},
$$\displaystyle \sum_{j,k=1}^{m}a_{jk}\displaystyle \int_{\R^N} |u_j^n|^p |u_k^n|^p \leq C \left(\displaystyle\sum_{j=1}^{m}\|\Delta u_j^n\|^2\right)^{\frac{(p-1)N}{4}}\left(\displaystyle\sum_{j=1}^{m}\|u_j^n\|^2\right)^{\frac{N-p(N -4)}{4}}.$$
Since $p_*<p<p^*$, $\min\{(2\alpha+(N-4)\beta),2\alpha+N\beta\}>0$ and
$$K^Q(u_j^n) = \Big((2\alpha + (N - 4 )\beta) \|\Delta u_j^n\|^2 + (2\alpha + N \beta) \| u_j^n\|^2   \Big) \rightarrow 0,$$
yields
$$   \displaystyle \sum_{j,k=1}^{m}a_{jk}\displaystyle \int_{\R^N} |u_j^n|^p |u_k^n|^p = o\left( \displaystyle\sum_{j=1}^m \|\Delta u_j^n\|^2\right) = o\left(\displaystyle\sum_{j=1}^m K^Q(u_j^n)  \right) .   $$
Thus
$$K(u_1^n,...,u_m^n) \simeq\frac{1}{2}\displaystyle\sum_{j=1}^m K^Q(u_j^n)\geq 0 . $$
\end{proof}
We read an auxiliary result.
\begin{lem}\label{Lemma} 
Let $(0,0)\neq(\alpha, \beta)\in\R_+^2$ satisfying $(N,\alpha)\neq (4,0)$. Then
$$m_{\alpha,\beta}  = \inf_{0\neq\phi\in H_{rd}}\big\{H(\phi)\quad\mbox{s.\, th}\quad K(\phi)\leq 0 \big\}.$$
\end{lem}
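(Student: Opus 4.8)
The plan is to prove the two inequalities separately. Writing $\tilde m := \inf\{H(\phi) : 0\neq\phi\in H_{rd},\ K(\phi)\leq 0\}$, the goal is $m_{\alpha,\beta}=\tilde m$. The inequality $\tilde m\leq m_{\alpha,\beta}$ is immediate: the constraint set $\{K=0\}$ is contained in $\{K\leq 0\}$, and the identity $H = S - \frac{1}{2\alpha+N\beta}K$ forces $H(\phi)=S(\phi)$ whenever $K(\phi)=0$. Hence every competitor for $m_{\alpha,\beta}$ is a competitor for $\tilde m$ realizing the same value, so $\tilde m\leq m_{\alpha,\beta}$.

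The substantial direction is $m_{\alpha,\beta}\leq\tilde m$. Given $0\neq\phi\in H_{rd}$ with $K(\phi)\leq 0$, the plan is to rescale $\phi$ along $\lambda\mapsto\phi^\lambda$ onto the constraint $\{K=0\}$ without increasing $H$. First I would use the group law $(\phi^\lambda)^\mu=\phi^{\lambda+\mu}$ to write $K(\phi^\lambda)=\pounds S(\phi^\lambda)=\frac{d}{d\lambda}S(\phi^\lambda)$. Setting the scaling exponents $a:=2\alpha+(N-4)\beta$, $b:=2\alpha+N\beta$, $c:=2p\alpha+N\beta$ and the quantities $A:=\frac12\sum_j\|\Delta\phi_j\|^2$, $B:=\frac12\sum_j\|\phi_j\|^2$, $D:=\frac{1}{2p}\sum_{j,k}a_{jk}\int_{\R^N}|\phi_j\phi_k|^p$, a direct change of variables gives
$$S(\phi^\lambda)=Ae^{a\lambda}+Be^{b\lambda}-De^{c\lambda},\qquad K(\phi^\lambda)=aAe^{a\lambda}+bBe^{b\lambda}-cDe^{c\lambda}.$$
Under the hypotheses $(0,0)\neq(\alpha,\beta)\in\R_+^2$, $p>1$ and $(N,\alpha)\neq(4,0)$, I would then record the exponent ordering $0<a\leq b\leq c$ with $c>a$, together with $A,B>0$ (positivity of $A$ uses that a nonzero $H^2$ function cannot be harmonic) and $D>0$ (which follows from $K(\phi)\leq 0$ since $aA+bB>0$).

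With this in hand the rescaling is produced by an intermediate-value argument on $g(\lambda):=K(\phi^\lambda)$. Because $a$ is the smallest exponent and its coefficient $aA$ (or $aA+bB$ when $a=b$) is strictly positive, $g(\lambda)>0$ for $\lambda$ sufficiently negative; combined with $g(0)=K(\phi)\leq 0$, continuity yields some $\lambda_0\leq 0$ with $K(\phi^{\lambda_0})=0$. Then $\phi^{\lambda_0}$ is a nonzero radial competitor for $m_{\alpha,\beta}$ (the scaling preserves radial symmetry), so $m_{\alpha,\beta}\leq S(\phi^{\lambda_0})=H(\phi^{\lambda_0})$, the last equality holding because $K(\phi^{\lambda_0})=0$. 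Finally, the monotonicity of $\lambda\mapsto H(\phi^\lambda)$ established above, together with $\lambda_0\leq 0$, gives $H(\phi^{\lambda_0})\leq H(\phi)$, whence $m_{\alpha,\beta}\leq H(\phi)$; taking the infimum over all admissible $\phi$ yields $m_{\alpha,\beta}\leq\tilde m$ and closes the proof.

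The main obstacle is the middle step: guaranteeing that the zero of $g$ can be selected with $\lambda_0\leq 0$, since this sign constraint is exactly what makes the monotonicity usable, and it is where the exponent ordering and the positivity of $A,B,D$ enter. The excluded degenerate case $(N,\alpha)=(4,0)$ is precisely the one in which $a=0$ and $b=c$, so that $g$ collapses to a single exponential of fixed sign and no such $\lambda_0$ need exist; I would flag this to motivate the hypothesis $(N,\alpha)\neq(4,0)$.
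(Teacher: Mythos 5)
Your proof is correct, and its skeleton coincides with the paper's: reduce everything to the inequality $m_{\alpha,\beta}\leq\tilde m$, flow a competitor $\phi$ with $K(\phi)\leq 0$ along the scaling $\lambda\mapsto\phi^\lambda$ until it meets the constraint $\{K=0\}$ at some $\lambda_0\leq 0$, and conclude with the monotonicity of $\lambda\mapsto H(\phi^\lambda)$. The one genuine difference lies in how positivity of $K(\phi^\lambda)$ for $\lambda$ very negative is justified. The paper notes that $K^Q(\phi^\lambda)\to 0$ as $\lambda\to-\infty$ and invokes Lemma \ref{K>0}, whose proof relies on the Gagliardo--Nirenberg inequality \eqref{Nirenberg} and on the restriction $p_*<p<p^*$; you instead compute $K(\phi^\lambda)=aAe^{a\lambda}+bBe^{b\lambda}-cDe^{c\lambda}$ explicitly and read off the sign from the exponent ordering $0<a\leq b\leq c$, $c>a$, together with $A,B,D>0$. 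Your route is more elementary and self-contained: it needs only $p>1$ at this step, avoids the interpolation inequality entirely, and makes transparent why the hypothesis $(N,\alpha)\neq(4,0)$ is exactly what excludes the degenerate collapse $a=0$, $b=c$ --- a point the paper leaves implicit. What the paper's route buys is economy: Lemma \ref{K>0} is needed anyway (it is reused in the second step of the proof of Theorem \ref{th2}), so citing it there costs nothing. As a standalone argument yours is, if anything, tighter; you also spell out the trivial inequality $\tilde m\leq m_{\alpha,\beta}$ and the boundary case $K(\phi)=0$, both of which the paper passes over in silence (it only treats $K(\phi)<0$).
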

\begin{proof} 
Denoting by $a$ the right hand side of the previous equality, 
 it is sufficient to prove that $m_{\alpha,\beta}\leq a.$ Take $\phi \in H$ such that $K(\phi)<0.$ Because $\displaystyle\lim_{\lambda\rightarrow -\infty}K^Q(\phi^\lambda)=0,$ by the previous Lemma, there exists some $\lambda<0$ such that $ K(\phi^\lambda)>0.$ With a continuity argument there exists $\lambda_0\leq0$ such that $K (\phi^{\lambda_0}) = 0,$ then since $\lambda\mapsto H(\phi^\lambda)$ is increasing, we get
$$ m_{\alpha, \beta}\leq H(\phi^{\lambda_0}) \leq H(\phi).$$
This closes the proof.
\end{proof}{}
{\bf Proof of theorem \ref{th2}}\\
\underline{First case $(N,\alpha) \neq (4,0).$}\\
Let $(\phi_n):=(\phi_1^n,...,\phi_m^n) $ be a minimising sequence, namely
\begin{equation} \label{suite}0\neq (\phi_n) \in H_{rd},\quad K(\phi_n) = 0\quad \mbox{and}\quad \lim_n H(\phi_n) = \lim_n S(\phi_n) = m.\end{equation}
$\bullet$ First step: $(\phi_n)$ is bounded in $H.$\\
First subcase $\alpha\neq 0.$ Write
{\small\begin{gather*}
\alpha \Big(\displaystyle \sum_{j=1}^m \|\phi_j^n\|_{H^2}^2  - \displaystyle\sum_{j,k=1}^m a_{jk}\displaystyle\int_{\R^N}|\phi_j^n \phi_k^n|^p\,dx\Big) = \frac{\beta}{2}\Big( 4 \displaystyle \sum_{j=1}^m \|\Delta \phi_j^n\|^2 - N \displaystyle\sum_{j=1}^m \|\phi_j^n\|_{H^2}^2
 + \frac{N}{p} \displaystyle\sum_{j,k=1}^m a_{jk}\displaystyle \int_{\R^N}|\phi_j^n\phi_k^n|^p\,dx\Big);\\
\displaystyle\sum_{j=1}^m  \| \phi_j^n\|_{H^2}^2   -  \frac{1}{p}\displaystyle\sum_{j,k=1}^m a_{jk} \displaystyle\int_{\R^N}|\phi_j^n \phi_k^n|^{p}\,dx \rightarrow 2m.
\end{gather*}}
Assume that $\beta\neq 0,$ denoting $\lambda:= \frac{\beta}{2\alpha},$ yields
{\small$$\displaystyle \sum_{j=1}^m \|\phi_j^n\|_{H^2}^2  - \displaystyle\sum_{j,k=1}^m a_{jk}\displaystyle\int_{\R^N}|\phi_j^n \phi_k^n|^p\,dx = \lambda\Big( 4 \displaystyle \sum_{j=1}^m \|\Delta \phi_j^n\|^2 - N \displaystyle\sum_{j=1}^m \|\phi_j^n\|_{H^2}^2+ \frac{N}{p} \displaystyle\sum_{j,k=1}^m a_{jk}\displaystyle \int_{\R^N}|\phi_j^n\phi_k^n|^p\,dx\Big). $$}
So the following sequences are bounded
\begin{gather*}
- 4 \lambda \displaystyle\sum_{j=1}^m\|\Delta \phi_j^n\|^2 + \displaystyle \sum_{j=1}^m \|\phi_j^n\|_{H^2}^2  - \displaystyle\sum_{j,k=1}^m a_{jk}\displaystyle\int_{\R^N}|\phi_j^n \phi_k^n|^p\,dx;\\
\displaystyle\sum_{j=1}^m \|\phi_j^n\|_{H^2}^2 - \frac{1}{p} \displaystyle\sum_{j,k=1}^m a_{jk}\displaystyle \int_{\R^N}|\phi_j^n\phi_k^n|^p\,dx. 
\end{gather*}
Thus, for any real number $a,$ the following sequence is also bounded
$$ 4 \lambda \displaystyle\sum_{j=1}^m\|\Delta \phi_j^n\|^2  + (a - 1) \displaystyle\sum_{j=1}^m \|\phi_j^n\|_{H^2}^2 + (1 - \frac{a}{p}) \displaystyle\sum_{j,k=1}^m a_{jk}\displaystyle \int_{\R^N}|\phi_j^n\phi_k^n|^p\,dx.$$
Choosing $a\in(1,p),$ it follows that $(\phi_n)$ is bounded in $H.$\\
Second subcase $\alpha=0$ and $N\geq5.$ Write
$$\sum_{j=1}^m \|\Delta \phi_j^n\|^2\lesssim H_{0,\beta}({\bf \phi^n})=\frac{1}{2\alpha +N\beta }\Big[\displaystyle\sum_{j=1}^m 2\beta \|\Delta \phi_j^n\|^2  + \alpha (1-\frac{1}{p})\displaystyle\sum_{j,k=1}^m a_{jk}\displaystyle\int_{\R^N}|\phi_j^n\phi_k^n|^{p}\,dx\Big]\leq m.$$
Assume that $\displaystyle\lim_n\displaystyle\sum_{j=1}^m \| \phi_j^n\|=\infty$. Then, taking account of the interpolation inequality \eqref{Nirenberg}, we get
$$\sum_{j=1}^m \| \phi_j^n\|^2\lesssim K^Q({\bf \phi^n})=-K^N({\bf\phi^n})\lesssim \left(\displaystyle\sum_{j=1}^{m}\|\phi_j^n\|^2\right)^{\frac{N-p(N -4)}{4}}.$$
This is a contradiction because $\frac{N-p(N -4)}{4}=p-\frac{N(p-1)}4<1$.\\
$\bullet$ Second step: the limit of $(\phi_n)$ is nonzero and $m>0.$\\
Taking account of the compact injection \eqref{radial}, we take
$$ (\phi_1^n,...,\phi_m^n) \rightharpoonup \phi= (\phi_1,...,\phi_m)\quad \mbox{in}\quad H$$
and
$$ (\phi_1^n,...,\phi_m^n) \rightarrow  (\phi_1,...,\phi_m) \quad\mbox{in}\quad (L^{2p})^{(m)}.$$
The equality $K(\phi_n) =0$ implies that
$$  \frac{2\alpha + (N-4)\beta}{2} \displaystyle\sum_{j=1}^m \|\Delta \phi_j^n\|^2 + \frac{2\alpha + N\beta}{2} \displaystyle\sum_{j=1}^m \|\phi_j^n\|^2 =\frac{2\alpha p + N\beta}{2p} \displaystyle\sum_{j,k=1}^m a_{jk}\displaystyle \int_{\R^N} |\phi_j^n \phi_k^n|^p\,dx.$$
Assume that $\phi =0$. Using H\"older inequality, then because $p>2,$
$$\|\phi_j^n\phi_k^n\|_p^p \leq \|\phi_j^n\|_{2p}^p \|\phi_k^n\|_{2p}^p \rightarrow \|\phi_j\|_{2p}^p \|\phi_k\|_{2p}^p = 0.$$
Now, by lemma \ref{K>0} yields $K(\phi_n)>0$ for large $n$. This contradiction implies that 
$$\phi \neq 0.$$
With lower semi continuity of the $H^2$ norm, we have 
\begin{eqnarray*}
0 &=& \liminf_n K(\phi_n)\\
&\geq& \frac{2\alpha + (N-4)\beta}{2} \liminf_n\displaystyle\sum_{j=1}^m \|\Delta \phi_j^n\|^2 + \frac{2\alpha + N\beta}{2} \liminf_n\displaystyle\sum_{j=1}^m \|\phi_j^n\|^2\\& -&\frac{2\alpha p + N\beta}{2p} \displaystyle\sum_{j,k=1}^m a_{jk}\displaystyle \int_{\R^N} |\phi_j \phi_k|^p\,dx\\
&\geq& K(\phi).
\end{eqnarray*}
Similarly, we have $H(\phi)\leq m.$ Moreover, thanks to Lemma \ref{Lemma}, we can assume that $K(\phi) =0$ and $S(\phi) = H(\phi)\leq m.$ So that $\phi$ is a minimizer satisfying \eqref{suite} and
$$m_{\alpha,\beta}=H(\phi) = \frac{1}{2\alpha + N\beta}\Big( 2\beta\displaystyle\sum_{j=1}^m \|\Delta\phi_j\|^2  +  \alpha(1 - \frac{1}{p})\displaystyle\sum_{j,k=1}^m a_{jk}\displaystyle\int_{\R^N} |\phi_j\phi_k|^p\,dx\Big)>0.$$
$\bullet$ Third step: the limit $\phi$ is a solution to \eqref{E}.\\
There is a Lagrange multiplier $\eta \in \R$ such that $S^\prime(\phi) = \eta K^\prime(\phi).$ Thus
$$0 = K(\phi) = \pounds S(\phi)= \langle S^\prime(\phi), \pounds(\phi)\rangle = \eta\langle K^\prime(\phi), \pounds(\phi)\rangle = \eta\pounds K(\phi) = \eta\pounds^2S(\phi). $$
With a previous computation, we have
{\small\begin{eqnarray*}
-\pounds^2 S(\phi) - (2\alpha+(N-4)\beta)(2\alpha + N\beta)S(\phi)& =& - (\pounds - (2\alpha+(N-4)\beta))(\pounds - (2\alpha + N\beta))S(\phi) \\
&=&\frac{1}{2p}2\alpha(p - 1)(2\alpha(p -1) + 4\beta)\displaystyle\sum_{j,k=1}^m a_{jk}\displaystyle\int_{\R^N}|\phi_j\phi_k|^p\,dx\\
&>0.&
\end{eqnarray*}}
Therefore $\pounds^2S(\phi)<0.$
Thus $\eta =0$ and $S^\prime(\phi) = 0.$ So, $\phi$ is a ground state and $m$ is independent of $\alpha,\, \beta.$\\
\underline{Second case $\alpha = 0$ and $N=4$.}\\
Let $(\phi_n):=(\phi_1^n,...,\phi_m^n) $ be a minimising sequence, satisfying \eqref{suite}.\\
$\bullet$ First step: $(\phi_n)$ is bounded in $H.$\\
Without loss of generality, take $\beta=1.$ We have
$$ H_{0,1}(\phi_n) = \frac{1}{2}\displaystyle\sum_{j=1}^m \|\Delta \phi_j^n\|^2;$$
$$ K_{0,1}(\phi_n) = 2\displaystyle\sum_{j=1}^m\|\phi_j^n\|^2 - \frac{2}{p}\displaystyle\sum_{j,k=1}^m a_{jk}\displaystyle \int_{\R^N} |\phi_j^n\phi_k^n|^p\,dx.$$
By \eqref{suite} via the definition of $H_{0,1},$ $(\phi_n)$ is bounded in (\.H$^2)^{(m)}.$ Now because
$$ H_{0,1}(\phi_n)= H_{0,1}(\phi_n^\lambda),\quad K_{0,1}(\phi_n^\lambda) = e^{4\lambda}K_{0,1}(\phi_n) =0,$$ 
by the scaling $\phi_n^\lambda:= \phi_n(e^{-\lambda}.),$ we may assume that $\|\phi_j^n\| =1$ for $j \in [1,m].$ Thus $(\phi_n) $ is bounded in $ H.$\\
$\bullet$ Second step: the limit of $(\phi_n)$ is nonzero and $m>0.$\\
Taking account of the compact injection $\eqref{radial},$ we take
$$ (\phi_1^n,...,\phi_m^n) \rightharpoonup (\phi_1,...,\phi_m)\quad \mbox{in}\quad H$$
and
$$ (\phi_1^n,...,\phi_m^n) \rightarrow (\phi_1,...,\phi_m) \quad\mbox{in}\quad (L^{2p})^{(m)}.$$
Now, by the fact $0 = K (\phi_n),$ we have
$$  1 = \frac{1}{p}\displaystyle\sum_{j,k=1}^m\displaystyle\int_{\R^4} a_{jk}|\phi_j^n\phi_k^n|^p\,dx .$$
Moreover, if $\phi=0,$ we have
$$\|\phi_j^n\phi_k^n\|_p^p\leq \|\phi_j^n\|_{2p}^p\|\phi_k^n\|_{2p}^p\to \|\phi_j\|_{2p}^p\|\phi_k\|_{2p}^p =0,$$
which is a contradiction. Then
$$\phi \neq 0.$$
For $0<\lambda\longrightarrow0,$ we have 
$$ \displaystyle\sum_{j,k=1}^m a_{jk}\displaystyle\int_{\R^4} |\lambda \phi_j|^p |\lambda \phi_k|^p\,dx = o(K^Q(\lambda \phi)) = \lambda^2 K^Q(\phi),$$
hence $K^Q(\lambda \phi)>0.$ Thus,
$$ K_{0,1}(\phi)<0\Rightarrow \exists \lambda\in(0,1),\;\mbox{s.th}\; K_{0,1}(\lambda \phi)=0\,\,\mbox{and}\,\, H_{0,1}(\lambda \phi)\leq H_{0,1}(\phi).$$
So, we may assume that $K(\phi) =0$ and $S(\phi) = H(\phi)\leq m.$ Then $\phi$ is a minimizer and $m = H(\phi)>0.$\\
$\bullet$ Third step: The limit $\phi$ is a solution to \eqref{E}.\\
With a lagrange multiplicator $\eta\in \R,$ we have $S^\prime(\phi) = \eta K^\prime(\phi).$ Moreover, since
$$S^\prime(\phi_j) =\Delta^2\phi_j + \phi_j - \displaystyle\sum_{k=1}^m a_{jk}|\phi_k|^p|\phi_j|^{p-2}\phi_j\quad\mbox{and}\quad K^\prime(\phi_j) =  4 \phi_j - 4 \displaystyle\sum_{k=1}^m a_{jk} |\phi_k|^p|\phi_j|^{p-2}\phi_j. $$
it follows that
$$ \Delta^2\phi_j = (4\eta - 1)\big(\phi_j - \displaystyle\sum_{k=1}^m|\phi_k|^p|\phi_j|^{p -2}\phi_j\big).$$
Since $\langle \Delta ^2\phi_j,\phi_j\rangle>0$ and 
\begin{eqnarray*}
 \displaystyle\sum_{j=1}^m\displaystyle\int_{\R^N}\big( |\phi_j|^2 - \displaystyle\sum_{k=1}^m |\phi_k|^p |\phi_j|^p\big)\,dx
&=&  K_{0,1}(\phi)  -\displaystyle\sum_{j=1}^m \|\phi_j\| ^2 + (\frac{2}{p} - 1) \displaystyle\sum_{j,k=1}^m a_{jk}\displaystyle\int_{\R^N}|\phi_k\phi_j|^p\,dx \\
&=&  -\displaystyle\sum_{j=1}^m \|\phi_j\| ^2 + (\frac{2}{p} - 1) \displaystyle\sum_{j,k=1}^m a_{jk}\displaystyle\int_{\R^N}|\phi_k\phi_j|^p\,dx                    <0.
\end{eqnarray*}
Then $(4\eta - 1)<0.$ Finally, choosing a real number $\lambda$ such that $e^{-4\lambda}(4\eta - 1) = -1$, existence of a ground state follows taking account of the equality
$$ \Delta^2(\phi_j(e^{- \lambda}.)) = e^{-4\lambda}(4\eta - 1)\Big(\phi_j(e^{-\lambda}.) + \displaystyle\sum_{k=1}^m | \phi_k(e^{-\lambda}.)|^p |\phi_j(e^{-\lambda}.)|^{p - 2}\phi_j(e^{-\lambda}.)   \Big).$$
\subsection{Existence of vector ground state}
Now, we present the proof of Theorem \ref{unicité}, which contains two parts.
\begin{enumerate}
\item The first one deals with existence of a more that one non zero component ground state for large $\beta.$\\
Take $\phi: =(\phi_1,...,\phi_m)$ such that $(0,...,\phi_{j},...,0)$ is a ground state solution to \eqref{E}. So, $\phi_{j}$ satisfies
$$\Delta^2 \phi_j + \phi_j = \mu_j \phi_j|{\phi_j}|^{2p-2}\quad\mbox{and}\quad\displaystyle\sum_{j=1}^m\|\phi_{j}\|_{H^2}^2 =\displaystyle\sum_{j=1}^m \mu_j\|\phi_{j}\|_{2p}^{2p}.$$
Moreover, by Pohozaev identity it follows that
$$\frac{N - 4}{2}\displaystyle\sum_{j=1}^m\|\Delta \phi_{j}\|^2 + \frac{N}{2}\displaystyle\sum_{j=1}^m\|\phi_{j}\|^2 = \frac{N}{2p}\displaystyle\sum_{j=1}^m \mu_j\|\phi_{j}\|_{2p}^{2p}.$$
Collecting the previous identities, we can write
\begin{equation}\label{phz}
\displaystyle\sum_{j=1}^{m}\|\phi_{j}\|^2 = \Big(1 - \frac{N}{4} + \frac{N}{4p}\Big)\displaystyle\sum_{j=1}^{m} \mu_j\|\phi_{j}\|_{2p}^{2p}.
\end{equation}
Setting, for $t>0,$ the real variable function $\gamma(t):=\big(\phi_{1}(\frac{.}{t}),...,\phi_{m}(\frac{.}{t})\big),$ compute
\begin{eqnarray*}
K_{0,1}(\gamma(t)) &=& \frac{N - 4}{2 }t^{N - 4}\displaystyle\sum_{j=1}^m\|\Delta\phi_j\|^2  + \frac{N }{2}t^{ N}\displaystyle\sum_{j=1}^m\|\phi_j\|^2 - \frac{N}{2p}t^{ N}\displaystyle\sum_{j=1}^m \mu_j\|\phi_j\|_{2p}^{2p} \\ & -& \frac{N}{2p} \beta t^{ N}\displaystyle\sum_{ 1 \leq k\neq j\leq m} \displaystyle\int_{\R^N}|\phi_j\phi_k|^p\,dx
\end{eqnarray*}
and
\begin{eqnarray*}
g(t)
&:=&S(\gamma(t))\\
 &=& \frac{1}{2}t^{N - 4}\displaystyle\sum_{j=1}^m\|\Delta\phi_j\|^2  +  \frac{1}{2}t^{ N}\displaystyle\sum_{j=1}^m\|\phi_j\|^2 - \frac{1}{2p}t^{ N}\displaystyle\sum_{j=1}^m \mu_j\|\phi_j\|_{2p}^{2p} \\
 & -& \frac{1}{2p} \beta t^{ N}\displaystyle\sum_{1\leq k\neq j\leq m} \displaystyle\int_{\R^N}|\phi_j\phi_k|^p\,dx.
\end{eqnarray*} 
Thanks to \eqref{phz}, $ g(t)<0$ for large $t.$ Then, since $g(0)=0$ The maximum of $g(t)$ for $t\geq0$ is achieved at $\bar t>0.$ Precisely $g(\bar t) = \displaystyle\max_{t\geq0} g(t).$ Moreover, 
\begin{eqnarray*}
g^{\prime}(\bar t)=0&=& {\bar t}^{N -1}\Big(\frac{N -4}{2} {\bar t}^{-4}\displaystyle\sum_{j=1}^m\|\Delta \phi_j\|^2 + \frac{N}{2}\displaystyle\sum_{j=1}^m\|\phi_j\|^2 - \frac{N}{2p}\displaystyle\sum_{j=1}^m \mu_j\|\phi_j\|_{2p}^{2p} \\&-& \frac{N}{2p}\beta\displaystyle\sum_{1\leq j\neq k\leq m}\displaystyle\int_{\R^N}|\phi_j\phi_k|^p\,dx\Big).
\end{eqnarray*} 
Then,
$${\bar t}= \Big(\frac{N - 4}{N}\Big)^{\frac{1}{4}}\frac{\left(\displaystyle\sum_{j=1}^m\|\Delta\phi_j\|^2\right)^{\frac{1}{4}}}{\left( \frac{1}{p}\displaystyle\sum_{j=1}^m \mu_j\|\phi_j\|_{2p}^{2p}   +  \frac{1}{p}\beta \displaystyle\sum_{1\leq j\neq k\leq m} \displaystyle\int_{\R^N}|\phi_j\phi_k|^p\,dx - \displaystyle\sum_{j=1}^m \|\phi_j\|^2\right)^{\frac{1}{4}}}.$$
Thus, the maximum value of $g$ is 
\begin{eqnarray*}
 g(\bar t)& =& \displaystyle\max_{t\geq0} g(t)\\
 &=&{\frac{2(N -4)^{\frac{N -4}{4}}}{N^{\frac{N}{4}}}}\frac{\left(\displaystyle\sum_{j=1}^m\|\Delta\phi_j\|^2\right)^{\frac{N}{4}}}{\left( \frac{1}{p}\displaystyle\sum_{j=1}^m \mu_j\|\phi_j\|_{2p}^{2p}   +  \frac{1}{p}\beta \displaystyle\sum_{1\leq j\neq k\leq m} \displaystyle\int_{\R^N}|\phi_j\phi_k|^p\,dx - \displaystyle\sum_{j=1}^m \|\phi_j\|^2\right)^{\frac{N - 4}{4}}}. 
\end{eqnarray*}
Now, take $\bar{{\bf u}}:=(\bar{u}_1,..,\bar{u}_m)$ a ground state to \eqref{S}, when $\beta\longrightarrow\infty$, it follows from the previous equality that
$$0<m=S(\bar  {{\bf u}})\leq S(\phi_1(\frac.{\bar t}),...,\phi_m(\frac.{\bar t}))\longrightarrow0.$$
This contradiction achieves the proof.
\item The second one guarantees the uniqueness of a positive vector solution to the system \eqref{E1} when $\beta>0$ is sufficiently small.\\
It is sufficient to apply the implicit Theorem, and some known result \cite{Swanson} about existence of a unique positive radial solution when $\beta=0$.
\end{enumerate}
\section{Global well-posedness}
This section is devoted to obtain global existence of a solution to the system \eqref{S}. We start with a classical result about stable sets under the flow of \eqref{S}. Define
$$ A_{\alpha,\beta}^+:= \{ {\bf u}\in H \,|\, S({\bf u})<m\quad \mbox{and}\quad K_{\alpha,\beta}({\bf u})\geq 0\}.$$
\begin{lem}\label{lem} For any $(0,0)\neq (\alpha,\beta)\in \R_+^2,$ the set $A_{\alpha,\beta}^+$ is invariant under the flow of \eqref{S}.
\end{lem}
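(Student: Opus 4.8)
The plan is to exploit the conservation laws of \eqref{S} together with the variational characterization of $m$, following the classical Payne--Sattinger potential-well argument. The first and decisive observation is that the action $S$ is itself conserved along the flow. Indeed, writing $\|u_j\|_{H^2}^2=\|u_j\|^2+\|\Delta u_j\|^2$ in the definition of $S$, one finds
\begin{equation*}
S({\bf u})=E({\bf u})+\frac12\sum_{j=1}^m M(u_j),
\end{equation*}
and since both the energy $E({\bf u})$ and each mass $M(u_j)$ are preserved by the flow, so is $S$. Consequently, if ${\bf u}(t_0)\in A_{\alpha,\beta}^+$, then $S({\bf u}(t))=S({\bf u}(t_0))<m$ for every $t$ in the interval of existence, so the first defining condition of $A_{\alpha,\beta}^+$ propagates automatically. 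This is really the heart of the matter: the open condition $S<m$ costs nothing because $S$ is constant.

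It then remains to propagate $K_{\alpha,\beta}({\bf u}(t))\geq0$. I would first discard the trivial case by assuming ${\bf u}(t_0)\neq0$. Next I would note that $t\mapsto K_{\alpha,\beta}({\bf u}(t))$ is continuous on the lifespan, since ${\bf u}\in C_{T^*}(H)$ and $K_{\alpha,\beta}$ is continuous on $H$ — the quadratic terms are continuous on each $H^2$, and the nonlinear term is controlled through the Gagliardo--Nirenberg inequality \eqref{Nirenberg}. Arguing by contradiction, suppose $K_{\alpha,\beta}({\bf u}(t))<0$ for some $t$; since it is nonnegative at $t_0$, continuity yields a first time $t_1$ with $K_{\alpha,\beta}({\bf u}(t_1))=0$.

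At this crossing time one has ${\bf u}(t_1)\neq0$: by uniqueness of solutions to \eqref{S}, the vanishing ${\bf u}(t_1)=0$ would force ${\bf u}\equiv0$, contradicting ${\bf u}(t_0)\neq0$. Hence ${\bf u}(t_1)$ is an admissible competitor in the minimization problem \eqref{M}, whence $S({\bf u}(t_1))\geq m$; but $S({\bf u}(t_1))=S({\bf u}(t_0))<m$ by the conservation established above, a contradiction. Therefore $K_{\alpha,\beta}({\bf u}(t))\geq0$ throughout, and ${\bf u}(t)\in A_{\alpha,\beta}^+$ for all $t$ in its lifespan.

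The point I expect to require the most care is the claim that ${\bf u}(t_1)$ genuinely competes in \eqref{M}: the infimum defining $m$ is taken over radial functions $H_{rd}$, whereas the solution supplied by the Cauchy theory need not be radial. I would resolve this either by restricting to radial initial data, for which the flow of \eqref{S} preserves radial symmetry so that ${\bf u}(t_1)\in H_{rd}$, or by first upgrading Lemma \ref{Lemma} to show that $m$ equals the infimum of $S$ over all nonzero ${\bf v}\in H$ satisfying $K_{\alpha,\beta}({\bf v})=0$. The remaining ingredients — conservation of $S$, continuity of $K_{\alpha,\beta}$ along the flow, and the no-vanishing argument via uniqueness — are routine.
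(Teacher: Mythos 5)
Your proof is correct and takes essentially the same route as the paper: conservation of $S({\bf u})=E({\bf u})+\frac12\sum_{j=1}^m M(u_j)$ propagates the condition $S<m$, and a continuity argument produces a first crossing time $t_1$ with $K_{\alpha,\beta}({\bf u}(t_1))=0$ and $S({\bf u}(t_1))<m$, contradicting the definition of $m$. You are in fact more careful than the paper, which silently skips both points you flag — ruling out ${\bf u}(t_1)=0$ (needed since the infimum \eqref{M} is over nonzero functions) and the mismatch between the minimization over $H_{rd}$ and possibly non-radial solutions.
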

\begin{proof} Let $ \Psi \in A_{\alpha,\beta}^+$ and ${\bf u} \in C_{T^*}(H)$ be the maximal solution to \eqref{S}. Assume that ${\bf u}(t_0) \not \in A_{\alpha,\beta}^+$ for some $t_0\in (0,T^*)$. Since $S({\bf u})$ is conserved, we have $K_{\alpha,\beta}({\bf u}(t_0))<0.$ So, with a continuity argument, there exists a positive time $t_1\in (0, t_0)$ such that $ K_{\alpha,\beta}({\bf u}(t_1)) = 0$ and $S({\bf u}(t_1))<m.$ This contradicts the definition of $m.$
\end{proof}{}
\begin{lem}\label{fn}
For any $(0,0)\neq(\alpha,\beta)\in\R_+^2$, the sets $A_{\alpha,\beta}^+$ and $A_{\alpha,\beta}^-$ are independent of $(\alpha,\beta)$.
\end{lem}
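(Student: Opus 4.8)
The plan is to isolate the only $(\alpha,\beta)$-dependent ingredient and then argue by continuity. Since the action $S$ does not involve $(\alpha,\beta)$ at all and $m=m_{\alpha,\beta}$ is independent of $(\alpha,\beta)$ by Theorem \ref{th2}(1), the condition ``$S({\bf u})<m$'' is common to all the sets $A^{\pm}_{\alpha,\beta}$; the only place where $(\alpha,\beta)$ enters is through the sign of $K_{\alpha,\beta}({\bf u})$. Hence it suffices to prove the following: for every ${\bf u}\in H$ with $S({\bf u})<m$, the sign of $K_{\alpha,\beta}({\bf u})$ is the same for all $(0,0)\neq(\alpha,\beta)\in\R_+^2$.

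Next I would exploit the linear dependence of the constraint on the parameters. Directly from its definition one reads off $K_{\alpha,\beta}({\bf u})=\alpha K_{1,0}({\bf u})+\beta K_{0,1}({\bf u})$, so $(\alpha,\beta)\mapsto K_{\alpha,\beta}({\bf u})$ is continuous on the connected set $\R_+^2\setminus\{(0,0)\}$. A continuous real-valued function that never vanishes on a connected set has constant sign, so the whole statement reduces to the nonvanishing claim
\[
K_{\alpha,\beta}({\bf u})\neq 0\quad\text{whenever}\quad S({\bf u})<m\ \text{and}\ {\bf u}\neq 0,
\]
the degenerate point ${\bf u}=0$ lying in $A^{+}_{\alpha,\beta}$ for every parameter and causing no trouble.

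The nonvanishing is where the variational characterization of $m$ enters. If $K_{\alpha_0,\beta_0}({\bf u})=0$ for some admissible $(\alpha_0,\beta_0)$ and some ${\bf u}\neq 0$, then ${\bf u}$ is a competitor in the constrained minimization \eqref{M} defining $m_{\alpha_0,\beta_0}$, whence $S({\bf u})\geq m_{\alpha_0,\beta_0}=m$, contradicting $S({\bf u})<m$. Equivalently one runs this through Lemma \ref{Lemma}: $K_{\alpha_0,\beta_0}({\bf u})=0$ forces $H_{\alpha_0,\beta_0}({\bf u})=S({\bf u})<m$, against $m=\inf\{H_{\alpha_0,\beta_0}(\phi):K_{\alpha_0,\beta_0}(\phi)\leq 0\}$. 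Combining, $K_{\alpha,\beta}({\bf u})$ keeps a fixed sign as $(\alpha,\beta)$ ranges over $\R_+^2\setminus\{(0,0)\}$: if it is positive then ${\bf u}\in A^{+}_{\alpha,\beta}$ for all such parameters, and if it is negative then ${\bf u}\in A^{-}_{\alpha,\beta}$ for all of them, which is exactly the asserted independence.

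The step I expect to be the crux is the nonvanishing claim in full generality. Because the infimum \eqref{M} defining $m$ runs over the radial class $H_{rd}$, the implication $K_{\alpha_0,\beta_0}({\bf u})=0\Rightarrow S({\bf u})\geq m$ is automatic only for radial ${\bf u}$; for arbitrary ${\bf u}\in H$ it presupposes that the constrained infimum over $H$ equals the radial one. For the biharmonic energy this equality is not accessible through Schwarz symmetrization, since rearrangement need not decrease $\|\Delta\,\cdot\,\|$. I would therefore either phrase and use the stable sets inside $H_{rd}$, where the connectedness argument above is complete, or supply a separate identification of the two infima before invoking the nonvanishing.
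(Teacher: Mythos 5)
Your proof is correct, but it runs along a genuinely different axis than the paper's. You fix ${\bf u}$ and move the parameters: writing $K_{\alpha,\beta}({\bf u})=\alpha K_{1,0}({\bf u})+\beta K_{0,1}({\bf u})$, you get continuity on the connected set $\R_+^2\setminus\{(0,0)\}$, and the nonvanishing claim (a nonzero ${\bf u}$ with $S({\bf u})<m$ cannot satisfy $K_{\alpha,\beta}({\bf u})=0$, else it would be a competitor in \eqref{M} and force $S({\bf u})\geq m_{\alpha,\beta}=m$ by Theorem \ref{th2}) then forces the sign of $K_{\alpha,\beta}({\bf u})$ to be constant in $(\alpha,\beta)$. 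The paper instead fixes the parameters and argues topologically in the function space $H$: it introduces the sets $A_{\alpha,\beta}^{\pm\delta}$, notes their union is the parameter-free sublevel set $\{S<m-\delta\}$, uses the same nonvanishing fact to conclude that $A_{\alpha,\beta}^{+\delta}$ is open, shows $A_{\alpha,\beta}^{+\delta}$ is connected (it contracts to zero under the rescaling ${\bf v}^\lambda=e^{\alpha\lambda}{\bf v}(e^{-\beta\lambda}\cdot)$ as $\lambda\to-\infty$), and then covers it by the two disjoint open sets $A_{\alpha',\beta'}^{+\delta}$ and $A_{\alpha',\beta'}^{-\delta}$; connectedness plus the common point $0$ gives $A_{\alpha,\beta}^{+\delta}=A_{\alpha',\beta'}^{+\delta}$. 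Both arguments pivot on exactly the same variational input, but yours buys simplicity: connectedness of a planar region is trivial, whereas the paper must justify openness of both sets and contractibility of $A^{+\delta}$ in $H$, points it treats rather tersely. Finally, the caveat you raise at the end is real but is not a defect of your proof relative to the paper's: the paper's own proof (and its Lemma \ref{lem}) invokes the definition of $m$ for arbitrary, not necessarily radial, elements of $H$, so it tacitly assumes the constrained infimum over $H$ coincides with the radial one \eqref{M}; you are in fact more careful than the source in making that hypothesis explicit and proposing either to state the lemma in $H_{rd}$ or to identify the two infima separately.
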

\begin{proof}
Let $(\alpha,\beta)$ and $(\alpha',\beta')$ in ${\mathbb R}_+^2-\{(0,0)\}$. We denote, for $\delta\geq0$, the sets 
\begin{gather*}
A_{\alpha,\beta}^{+\delta}:=\{{\bf v}\in H\quad\mbox{s. t}\quad S({\bf v})<m-\delta\quad\mbox{and}\quad K_{\alpha,\beta}({\bf v})\geq0\};\\
A_{\alpha,\beta}^{-\delta}:=\{{\bf v}\in H\quad\mbox{s. t}\quad S({\bf v})<m-\delta\quad\mbox{and}\quad K_{\alpha,\beta}({\bf v})<0\}.
\end{gather*}
By the previous result, the reunion $A_{\alpha,\beta}^{+\delta}\cup A_{\alpha,\beta}^{-\delta}$ is independent of $(\alpha,\beta)$. So, it is sufficient to prove that $A_{\alpha,\beta}^{+\delta}$ is independent of $(\alpha,\beta)$. If $S({\bf v})<m$ and $K_{\alpha,\beta}({\bf v})=0$, then $ {\bf v}=0$. So, $A_{\alpha,\beta}^{+\delta}$ is open. The rescaling ${\bf v}^\lambda:=e^{\alpha\lambda}{\bf v}(e^{-\beta\lambda}.)$ implies that a neighborhood of zero is in $A_{\alpha,\beta}^{+\delta}$. Moreover, this rescaling with $\lambda\rightarrow-\infty$ gives that $A_{\alpha,\beta}^{+\delta}$ is contracted to zero and so it is connected. Now, write $$A_{\alpha,\beta}^{+\delta}=A_{\alpha,\beta}^{+\delta}\cap( A_{\alpha',\beta'}^{+\delta}\cup A_{\alpha',\beta'}^{-\delta})=(A_{\alpha,\beta}^{+\delta}\cap A_{\alpha',\beta'}^{+\delta})\cup(A_{\alpha,\beta}^{+\delta}\cap A_{\alpha',\beta'}^{-\delta}).$$ 
Since by the definition, $A_{\alpha,\beta}^{-\delta}$ is open and $0\in A_{\alpha,\beta}^{+\delta}\cap A_{\alpha',\beta'}^{+\delta}$, using a connectivity argument, we have $A_{\alpha,\beta}^{+\delta}=A_{\alpha',\beta'}^{+\delta}$.
\end{proof}
Let us prove Theorem \ref{th3}, which is the main result of this section.\\
With a translation argument, we assume that $t_0 = 0.$ Thus, $S(\Psi)<m$ and with lemma \ref{lem}, ${\bf u}(t)\in A_{\alpha,\beta}^+$ for any $t\in [0, T^*).$ Moreover,
\begin{eqnarray*}
m&\geq& \big(S -\frac{1}{2 + N} K_{1,1}\big) ({\bf u})\\
&=& H_{1,1}({\bf u})\\
&=&\frac{1}{2 + N}\Big( 2\displaystyle\sum_{j=1}^m\|\Delta u_j\|^2  + (1 - \frac{1}{p})\displaystyle\sum_{j,k=1}^ma_{jk}\displaystyle\int_{\R^N}|u_ju_k|^p\,dx \Big)\\
&\geq& \frac{2}{2 + N}\displaystyle\sum_{j=1}^m\|\Delta u_j\|^2.
\end{eqnarray*}
Thus, ${\bf u}$ is bounded in (\.H$^2)^{(m)}.$ Preciesly
$$ \sup_{0\leq t\leq T^*}\displaystyle\sum_{j=1}^m\|\Delta u_j\|^2\leq \frac{(2 + N)m}{2}.$$
Moreover, since the $L^2$ norm is conserved, we have
$$ \sup_{0\leq t\leq T^*}\displaystyle\sum_{j=1}^m\| u_j\|_{H^2}^2 <\infty.$$
Finally, $T^* = \infty.$
\section{Gagliardo-Nirenberg inequality}
In this section we determine the best constant $C_{N,p,a_{jk}}$ in the Gagliardo-Nirenberg inequality \eqref{Nirenberg}. Precisely we prove Theorem \ref{constant}.\\
For $\psi_j\in H^2$ and $\nu,\, \mu>0$, we denote the scaling $\psi_j^{\nu,\mu}(x) = \nu\psi_j(\mu x)$ and compute 
$$ \|\psi_j^{\nu,\mu}\|^2 = \nu^2\mu^{-N}\|\psi_j\|^2,\quad \|\Delta\psi_j^{\nu,\mu}\|^2 = \nu^2\mu^{4 - N}\|\Delta\psi_j\|^2;$$
$$ \|\psi_j^{\nu,\mu}\|_{2p}^{2p} = \nu^{2p}\mu^{-N}\|\psi_j\|_{2p}^{2p},\quad \|\psi_j^{\nu,\mu}\psi_k^{\nu,\mu}\|_p^p = \nu^{2p}\mu^{-N}\|\psi_j\psi_k\|_p^p.$$
Therefore, $J(\psi_1^{\nu,\mu},...,\psi_m^{\nu,\mu}) = J(\psi_1,...,\psi_m),$ for any $(\psi_1,...,\psi_m)\in H.$ Let ${(\psi_1^s,...,\psi_m^s)}$ be a minimising sequence for \eqref{vector} and
$$ \nu_s = \frac{\Big(\displaystyle\sum_{j=1}^m\|\psi_j^s\|^2\Big)^{\frac{N - 4}{8}}}{\Big(\displaystyle\sum_{j=1}^m\|\Delta\psi_j^s\|^2\Big)^{\frac{N}{8}}},\quad\mu_s = \frac{\Big(\displaystyle\sum_{j=1}^m\|\psi_j^s\|^2\Big)^{\frac{1}{4}}}{\Big(\displaystyle\sum_{j=1}^m\|\Delta\psi_j^s\|^2\Big)^{\frac{1}{4}}}.$$
By the above scaling invariance, $\{((\psi_1^s)^{\nu_s,\mu_s},..., (\psi_m^s)^{\nu_s,\mu_s})\}$ is also a minimizing sequence. Moreover, for each $s\in\N$,
$$\displaystyle\sum_{j=1}^m\|\Delta(\psi_j^s)^{\nu_s,\mu_s}\|^2 =\displaystyle\sum_{j=1}^m\|(\psi_j^s)^{\nu_s,\mu_s}\|^2 = 1.$$
$(\psi_j^s)^{\nu_s,\mu_s}$ is also a minimizing sequence and is bounded in $H_{rd}.$ Therefore, there exist $(\psi_1^*,...,\psi_m^*) \in H_{rd} $ and a subsequence, denoted $\big((\psi_1^s)^{\nu_s,\mu_s},...,(\psi_m^s)^{\nu_s,\mu_s} \big),$ such that the weak convergence holds
$$\big((\psi_1^s)^{\nu_s, \mu_s},...,(\psi_m^s)^{\nu_s,\mu_s} \big) \rightharpoonup (\psi_1^*,...,\psi_m^*)\quad\mbox{in}\quad H_{rd}. $$
 Since $p<\frac{N}{N - 4},$ thanks to the compact Sobolev injection \eqref{radial},
$$\big((\psi_1^s)^{\nu_s, \mu_s},...,(\psi_m^s)^{\nu_s,\mu_s} \big) \rightarrow (\psi_1^*,...,\psi_m^*)\quad\mbox{in}\quad(L^{2p}(\R^N))^{(m)}.$$
 Since the $L^2$ norm is weakly lower semi-continuous,
$$\displaystyle\sum_{j=1}^m\|\Delta \psi_j^*\|^2\leq 1\quad\mbox{and}\quad \displaystyle\sum_{j=1}^m\|\psi_j^*\|^2\leq 1. $$
The strong convergence in $L^{2p}$ implies
$$P\big((\psi_1^s)^{\nu_s, \mu_s},...,(\psi_m^s)^{\nu_s,\mu_s} \big) \rightarrow P(\psi_1^*,...,\psi_m^* ).$$
Hence,
$$\alpha\leq J(\psi_1^*,...,\psi_m^*)\leq\frac{1}{P(\psi_1^*,...,\psi_m^*)} = \lim_{s\rightarrow\infty}J\big((\psi_1^s)^{\nu_s, \mu_s},...,(\psi_m^s)^{\nu_s,\mu_s} \big) =\alpha.$$
Therefore,
$$  \Big(\displaystyle\sum_{j=1}^m\|\Delta\psi_j^*\|^2\Big)^{\frac{(p-1)N}{4}} \Big(\displaystyle\sum_{j=1}^m\|\psi_j^*\|^2\Big)^{\frac{N - p(N - 4)}{4}} = 1$$
and consequently
$$\displaystyle\sum_{j=1}^m\|\Delta\psi_j^*\|^2=\displaystyle\sum_{j=1}^m\|\psi_j^*\|^2 =1. $$
Combined with weak convergence, one concludes that
$$\big((\psi_1^s)^{\nu_s, \mu_s},...,(\psi_m^s)^{\nu_s,\mu_s} \big) \rightarrow (\psi_1^*,...,\psi_m^*)\quad\mbox{in}\quad H_{rd}.  $$
Thus, $\alpha = J(\psi_1^*,...,\psi_m^*) .$ 
\begin{rem}
It follows from the previous equality that $(\psi_1^*,...,\psi_m^*)$ is a minimizer of $J$ in $H_{rd}$ and satisfies the Euler-Lagrange equation
$$ \frac{d}{d\epsilon}J(\psi_1^* + \epsilon v_1,...,\psi_m^* + \epsilon v_m)|_{\epsilon =0}=0\quad \mbox{for all}\quad  (v_1,...,v_m)\in (C_0^{\infty}(\R^N))^{(m)}.$$
Taking into account of the equalities $\displaystyle\sum_{j=1}^m\|\psi_j^*\|^2 =\displaystyle\sum_{j=1}^m\|\Delta\psi_j^*\|^2 =1, $ yields
$$ \frac{(p -1)N}{2}\Delta^2\psi_j^* + \frac{N - p(N -4)}{2}\psi_j^* = \alpha \displaystyle\sum_{k=1}^m a_{jk}|\psi_k^*|^p|\psi_j^*|^{p -2}\psi_j^*.$$
\end{rem}
Now, we prove the last part of Theorem \ref{constant}.\\
With uniqueness and scaling arguments, 
$$\psi_j(x) = \Big(\frac{4p - N(p-1)}{2\alpha\mu_j}\Big)^{\frac{1}{2p - 2}}w\Big(\Big(\frac{4p - N(p-1)}{N(p -1)}\Big)^{\frac{1}{4}}x\Big) := A_j w\Big(\Big(\frac{4p - N(p-1)}{N(p -1)}\Big)^{\frac{1}{4}}x\Big).$$
Compute
$$ J(\psi_1^*,..., \psi_m^*) = 2p f_{m,p}(\beta)\frac{\|\Delta w\|^{\frac{(p - 1)N}{2}} \|w\|^{\frac{N - p(N - 4)}{2}}}{ \|w\|_{2p}^{2p}}$$
where
$$ f_{m,p}(\beta) = \frac{\Big( \displaystyle\sum_{j =1}^mA_j^2\Big)^p}{ \displaystyle\sum_{j=1}^m \mu_j A_j^{2p} + \beta \displaystyle\sum_{1\leq j\neq k\leq m} A_j^pA_k^p}.$$
Denote $\frac{A_j}{A_1} = t_j$ for $j=2,...,m,$ then
$$ f_{m,p}(\beta) = \frac{\Big( 1 + \displaystyle\sum_{j =2}^m t_j^2\Big)^p}{\mu_1 +  \displaystyle\sum_{j=1}^m \mu_j t_j^{2p} + \beta F(t_2,...,t_m)}.$$
Therefore, for $\mu_1 = \displaystyle\min (\mu_2,...,\mu_m),$ we have
$$ f_{m,p}(0) \geq \frac{\Big( 1 + \displaystyle\sum_{j =2}^m t_j^2\Big)^p}{\mu_1\Big(1 +  \displaystyle\sum_{j=1}^m  t_j^{2p}\Big) }>\frac1{\mu_1}.$$
Pohozaev identity gives
$$ \|\Delta w\|^2 = \frac{N(p - 1)}{N - p(N - 4)}\|w\|^2\quad\mbox{and}\quad\|w\|_{2p}^{2p} = \frac{4p}{N - p(N - 4)}\|w\|^2.$$
Then
$$J(\psi_j,0) = \frac{ \big(N(p - 1)\big)^{\frac{(p - 1)N }{4}} \|w\|^{2p - 2} }{ 4p \big(N - p(N - 4)\big)^{\frac{(p - 1)N - 4}{4}}  }   . $$
Therefore, for small $\beta>0$, $J(\Psi^*)\geq J(\psi_j,0)$ and so the minimal constant is given by
$$ C_{N,p}= \min\{\mu_1,...,\mu_m\}\frac{4p \big(N - p(N - 4)\big)^{\frac{(p - 1)N - 4}{4}}}{\big(N(p - 1)\big)^{\frac{(p - 1)N }{4}} \|w\|^{2p - 2}}.  $$
\section{Global existence in the mass critical case}
Using the minimal constant $C_{N,p,a_{jk}}$ in the vector-valued Gagliardo-Nirenberg inequality, we deduce that the Cauchy problem \eqref{S} is well posed in the mass critical case $p =p_*$ provided the initial data is sufficiently small. More precisely, we prove Theorem \ref{p}.\\
Recall the energy functional by
$$ E(u_1,...,u_m) =  \frac{1}{2}\displaystyle\sum_{j=1}^m\|\Delta u_j(t)\|^2 - P(u_1(t),...,u_m(t)).$$
Using the minimal constant $\mathcal{C} = C_{N,p,a_{jk}}$ obtained above with $p= p_*,$
\begin{eqnarray*}
\displaystyle\sum_{j=1}^m\|\Delta u_j(t)\|^2 &=& 2E + 2P(u_1(t),...,u_m(t))\\
&\leq& 2E + 2\mathcal{C}\Big(\displaystyle\sum_{j=1}^m\|\Delta u_j(t)\|^2\Big)^{\frac{(p - 1)N}{4}}\Big(\displaystyle\sum_{j=1}^m\| u_j(t)\|^2\Big)^{\frac{N - p(N - 4)}{4}}\\
&\leq& 2E + 2\mathcal{C}\Big(\displaystyle\sum_{j=1}^m\|\Delta u_j(t)\|^2\Big)\Big(\displaystyle\sum_{j=1}^m\| u_j(t)\|^2\Big)^{\frac{4}{N}}.
\end{eqnarray*}
This implies that
$$ \Big( 1 - 2\mathcal{C}\Big( \displaystyle\sum_{j=1}^m\| \psi_j\|^2\Big)^{\frac{4}{N}}  \Big)\displaystyle\sum_{j=1}^m\|\Delta u_j(t)\|^2 \leq 2E.$$
Therefore, if the initial data $\displaystyle\sum_{j=1}^m\| \psi_j\|^2$ are chosen small enough, namely
$$\displaystyle\sum_{j=1}^m\|\psi_j\|^2< \Big( \frac{1}{2\mathcal{C}}\Big)^{\frac{N}{4}}, $$
the $H^2$ norm is bounded and so $T^*=\infty$.


\end{document}